\documentclass[11pt]{amsart}

\usepackage{amsmath,amssymb,amsthm,mathtools}
\usepackage{xcolor}
\usepackage[colorlinks=true,linkcolor=blue!55!black,citecolor=green!45!black,urlcolor=blue!55!black]{hyperref}

\usepackage[top=3cm, bottom=2.5cm, left=2cm, right=2cm]{geometry}

\newcommand{\seqnum}[1]{\href{https://oeis.org/#1}{\rm \underline{#1}}}
\newcommand{\TT}{\mathcal{TT}}
\newcommand{\RR}{\mathcal{R}}
\newcommand{\FF}{\mathcal{F}}
\newcommand{\JJ}{\mathcal{J}}
\newcommand{\nuu}{\nu}
\newcommand{\ind}{\mathbf{1}}

\theoremstyle{plain}
\newtheorem{theorem}{Theorem}[section]
\newtheorem{lemma}[theorem]{Lemma}
\newtheorem{proposition}[theorem]{Proposition}
\newtheorem{corollary}[theorem]{Corollary}

\theoremstyle{definition}

\newtheorem{conjecture}[theorem]{Conjecture}
\newtheorem{problem}[theorem]{Problem}
\theoremstyle{remark}

\title{Zero-Run Spectra of the $(3,2)$ Raney numbers Modulo Primes}
\author{Sen-Peng Eu}
\address{Department of Mathematics, National Taiwan Normal University, Taipei 116325, Taiwan, R.O.C.}
\author{Zai-Ting Huang}
\address{Department of Mathematics, National Taiwan Normal University, Taipei 116325, Taiwan, R.O.C.}
\author{Louis Kao}
\address{Department of Mathematics, Fu-Jen Catholic University, New Taipei City 242062, Taiwan, R.O.C.}
\date{\today}
\subjclass[2020]{Primary 11B50; Secondary 11A07, 11A63, 05A15}
\keywords{Raney numbers, congruences, zero runs, zero-run spectra, radix representations}
\date{\today}
\begin{document}
\begin{abstract}

We study the zero-run structure of the $(3,2)$-Raney numbers modulo a prime $p$.
For every prime $p$, we determine the left-to-right maxima of the zero-run lengths,
the complete zero-run spectrum, and the exact number of nonzero entries in
$0\le n<p^m$.
 Interestingly, these results fall into three cases: $p=2$, $p=3$, and $p\geq5$, and the behaviors in these three cases are very different.
For $p=2$, we characterize exactly the odd terms and determine the positions
of the left-to-right maxima and the results involve Fibbinary
integers, Fibonacci numbers, and Jacobsthal numbers.
For $p=3$, we characterize exactly the nonzero terms and determine their
residues.
For $p\ge 5$, the zero runs are governed by a multiscale system of residue
intervals modulo powers of $p$, from which both the record values and the
complete zero-run spectrum are obtained.

\end{abstract}
\maketitle
\section{Introduction}\label{sec:intro}
\subsection{The Raney numbers $R_{3,2}$}
The Raney numbers

$$
R_{k,r}(n)=\frac{r}{kn+r}\binom{kn+r}{n}
$$
play an important role in combinatorics~\cite{R_60}. For example, the ubiquitous Catalan numbers are just $R_{2,1}(n)$. 

Congruence properties of the Catalan numbers are extensively studied in the literature; see~\cite{AK_73,DS_06,S_03} for examples. In particular, Alter and Kubota~\cite{AK_73} studied the lengths and positions of blocks of Catalan numbers divisible by a prime, as well as their divisibility by prime powers. More precisely, 
for a prime $p$, let $B_k^{(p)}$ denote the $k$-th maximal block of consecutive Catalan numbers divisible by $p$, with length $L_k^{(p)}$. For $p=2$, they showed that $L_k^{(2)}=2^k-1$. For an odd prime $p$, let $m$ be the largest nonnegative integer such that
$\left(\frac{p+1}{2}\right)^m\mid k$, then
$$L_k^{(p)}=
\begin{cases} 
\frac{3^{m+2}-3}{2}, &\quad p=3 \\
\frac{p^{m+1}-3}{2}, &\quad p\ge 5
\end{cases} 
$$
 They also determined explicitly the positions of the blocks $B_k^{(p)}$.

It is then natural to seek analogous results for other Raney numbers. Although congruence properties of Raney numbers have been studied to some extent. For example, Bobrowski, He, and Shiue~\cite{BHS_19} obtained divisibility results for certain classes of Raney numbers, while more recently Krattenthaler and M\"{u}ller~\cite{KM_25} developed a general method for studying combinatorial sequences modulo prime powers and, among other applications, determined the modulo $p^k$ behavior of certain Fuss-Catalan numbers. These works are primarily concerned with divisibility and congruence properties. By contrast, the structure of maximal zero runs for Raney numbers appears to remain largely unexplored.

This is the focus of the present work. In this paper, we focus on the sequence \seqnum{A006013}

$$
\TT_n:=R_{3,2}(n)=\frac{1}{n+1}\binom{3n+1}{n}
=1,2,7,30,143,728,3876,21318,120175,\ldots.
$$

The importance of $\TT_n$ in enumerative combinatorics is that it occurs naturally in several core combinatorial structures. It directly counts ordered pairs of ternary trees (hence our notation) with a prescribed total number of internal nodes~\cite{B_16,EFP_26}. It also counts the total number of tails among all fighting fish of size $n+1$, and the number of horizontally symmetric fighting fish of size $2n+1$~\cite{EFP_26}. Moreover, fighting fish are in bijection with left ternary trees, two-stack-sortable permutations, non-separable rooted planar maps, and synchronized Tamari intervals, each of which is a core combinatorial structure. See~\cite{DGRS_16,DH_23,EFP_26,F_17,W_93} and the references therein. In fact, these relations and their refinements are still an active topic of research.

\subsection{Our goal}

Our goal is to investigate the congruence properties of $\TT_n \bmod p$, with particular emphasis on maximal consecutive strings of zero terms.

Fix a prime $p$. A \emph{zero run} of $\TT_n\bmod p$ is a maximal interval of consecutive indices on which $\TT_n\equiv 0\pmod p$. We number the zero runs from left to right and denote by $\RR_i^{(p)}$ the length of the $i$-th zero run, with $i\ge 1$. 

For a sequence $\langle a_i\rangle_{i\geq 1}$, we say that $a_k$ is a \emph{left-to-right maximum}, or a \emph{record}, if
$$
a_k\geq a_i\qquad\text{for every }i<k.
$$
Thus ties are allowed.
For $m\geq 1$, define the \emph{zero-run enumerator} of $\TT_n\bmod p$ on $0\leq n<p^m$ by
$$
Z_{p,m}(y)=\sum_R y^{|R|},
$$
where the sum runs over all maximal zero runs $R$ in the finite word
$\TT_0,\TT_1,\ldots,\TT_{p^m-1}\pmod p$
and $|R|$ is the length of $R$.
Note that a zero run meeting the right endpoint $p^m-1$ is counted with its truncated length inside this interval. 

Our goals are the following:
\begin{itemize}
\item For every prime $p$, determine the values of the records of the sequence $\langle \RR_i^{(p)}\rangle_{i\geq 1}$;

\item For every prime $p$, determine the zero-run enumerator $Z_{p,m}(y).$
\end{itemize}

In this paper, we completely solve the above two problems for all primes. Interestingly, the results fall into three cases: $p=2$, $p=3$, and $p\geq 5$, and the structures in these three cases are very different.
For $p=2$, the support (nonzero terms) is characterized by even Fibbinary integers, and the record structure is described by Fibonacci and Jacobsthal numbers.
For $p=3$, the support is characterized by weakly increasing ternary words. For $p\ge5$, the divisibility condition is governed instead by a hierarchy
of residue intervals modulo $p,p^2,p^3,\ldots$.

We remark that for $p=2,3$ some results on the zero run sequence appear in OEIS without proofs, see \seqnum{A085407} for example. For completeness, we also give proofs.

\subsection{An example for $p=7$}
Take $p=7$. The first $343$ terms ($0\le n\le 7^3-1$) of $\TT_n\bmod 7$ are
1, 2, 0, 2, 3, 0, 5, 3, 6, 0, 1, 5, 0, 4, 1, 2, 0, 0, 0, 0, 0, 0, 0, 0, 2, 3, 0, 6, 5, 3, 0, 2, 3, 0, 0, 0, 0, 0, 0, 0, 0, 0, 0, 0, 0, 0, 0, 0, 5, 3, 6, 0, 6, 2, 0, 1, 2, 4, 0, 3, 1, 0, 5, 3, 6, 0, 0, 0, 0, 0, 0, 0, 0, 1, 5, 0, 3, 6, 5, 0, 1, 5, 0, 0, 0, 0, 0, 0, 0, 0, 0, 0, 0, 0, 0, 0, 0, 4, 1, 2, 0, 2, 3, 0, 5, 3, 6, 0, 1, 5, 0, 4, 1, 2, 0, 0, 0, 0, 0, 0, 0, 0, 0, 0, 0, 0, 0, 0, 0, 0, 0, 0, 0, 0, 0, 0, 0, 0, 0, 0, 0, 0, 0, 0, 0, 0, 0, 0, 0, 0, 0, 0, 0, 0, 0, 0, 0, 0, 0, 0, 0, 0, 0, 0, 0, 0, 0, 0, 0, 0, 0, 2, 3, 0, 6, 5, 3, 0, 2, 3, 0, 0, 0, 0, 0, 0, 0, 0, 0, 0, 0, 0, 0, 0, 0, 6, 5, 3, 0, 3, 1, 0, 4, 1, 2, 0, 5, 4, 0, 6, 5, 3, 0, 0, 0, 0, 0, 0, 0, 0, 2, 3, 0, 6, 5, 3, 0, 2, 3, 0, 0, 0, 0, 0, 0, 0, 0, 0, 0, 0, 0, 0, 0, 0, 0, 0, 0, 0, 0, 0, 0, 0, 0, 0, 0, 0, 0, 0, 0, 0, 0, 0, 0, 0, 0, 0, 0, 0, 0, 0, 0, 0, 0, 0, 0, 0, 0, 0, 0, 0, 0, 0, 0, 0, 0, 0, 0, 0, 0, 0, 0, 0, 0, 0, 0, 0, 0, 0, 0, 0, 0, 0, 0, 0, 0, 0, 0, 0, 0, 0, 0, 0, 0, 0, 0, 0, 0, 0, 0, 0, 0, 0, 0, 0, 0, 0, 0, 0, 0, 0, 0, 0, 0, 0, 0, 0, 0, 0, 0, 0, 0, 0, 5$\ldots$

The sequence of zero-run lengths begins
$$\RR_i^{(7)}=1,1,1,1,8,1,1,15,1,1,1,1,8,1,1,15,1,1,1,1,57,\ldots $$

We will prove that the sequence of records $1,8,15,57,113,400,799,\dots$ are obtained by interlacing  the sequence $\langle \frac{7^r-4}{3}\rangle_{r\ge 1}=1, 15, 113, 799, 5601$ and 
$\langle \frac{7^r-1}{6}\rangle_{r\ge 2}=8, 57, 400, 2801,\dots$.

 Also, we have
$$Z_{7,3}(y)=24y+3y^8+3y^{15}+y^{57}+y^{113},$$
which means that, for $0\leq n\le 7^3-1$, the sequence $\TT_n\bmod 7$ contains 24 zero runs of length $1$, three of length $8$, three of length $15$, one of length $57$, and one of length $113$.

\bigskip

The rest of the paper is organized as follows. In Section~\ref{sec:prelim}, we introduce the notation and collect the basic valuation and carry lemmas used throughout the paper. In Section~\ref{sec:p2}, we treat the case $p=2$: we characterize the odd terms, determine the Fibonacci-Jacobsthal record structure, and obtain the complete zero-run enumerator. In Section~\ref{sec:p3}, we treat the case $p=3$ and determine both the nonzero residues and the complete zero-run spectrum. In Section~\ref{sec:pge5}, we treat all primes $p\geq 5$ by a multiscale residue-interval analysis and obtain the record values and spectrum. In Section~\ref{sec:support}, we count the nonzero entries in $0\leq n<p^m$ for every prime $p$. We conclude in Section~\ref{sec:conclusion} with several remarks and open directions.


\section{Definitions and basic lemmas}\label{sec:prelim}

We collect the notation and elementary tools.  Denote by $\FF_0=0,\FF_1=1$ and $\FF_{n+1}=\FF_n+\FF_{n-1}$ the \emph{Fibonacci numbers}, and denote by $\JJ_0=0,\JJ_1=1$ and $\JJ_{n+1}=\JJ_n+2\JJ_{n-1}$ the \emph{Jacobsthal numbers}. A \emph{Fibbinary number} is a nonnegative integer whose binary expansion contains no consecutive $1$'s.  The \emph{support} of a sequence is the set of indices with nonzero terms. 
For a statement $P$, we set $\ind_{P}=1$ if $P$ is true, and $0$ if false.

For a prime $p$ and a positive integer $N$, let $\nuu_p(N)$ denote the exponent of the largest power of $p$ dividing $N$. We use the convention $\nuu_p(1)=0$.
It is clear that 
\begin{equation}~\label{vp}
\nuu_p(\TT_n)=\nuu_p((3n+1)!)-\nuu_p((2n+1)!)-\nuu_p((n+1)!)
\end{equation}
for every prime $p$.

The main tool is Legendre's formula~\cite{L_30}.
\begin{lemma}[Legendre's formula]\label{lem:Legendre}
Let $n=(\mathtt{a_k a_{k-1}\cdots a_0})_p$ be the base-$p$ expansion of $n$. Then
$$\nuu_p(n!)=\sum_{j\geq 1}\left\lfloor\frac{n}{p^j}\right\rfloor=\frac{n-\sum_{i=0}^k\mathtt{a_i}}{p-1}.$$
\end{lemma}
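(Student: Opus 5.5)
Both equalities in Lemma~\ref{lem:Legendre} follow from counting, for each $j\ge 1$, the multiples of $p^j$ in $\{1,\dots,n\}$, and then rewriting in terms of digits.

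For the first equality, write $\nuu_p(n!)=\sum_{i=1}^{n}\nuu_p(i)$. Since $\nuu_p(i)=\sum_{j\ge1}\ind_{p^j\mid i}$, we may interchange the two sums; this is legitimate because only finitely many terms are nonzero, as $p^j>n$ for large $j$. This gives
$$\nuu_p(n!)=\sum_{j\ge1}\#\{1\le i\le n: p^j\mid i\}=\sum_{j\ge1}\left\lfloor\frac{n}{p^j}\right\rfloor.$$

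For the second equality, substitute the base-$p$ expansion $n=\sum_{i=0}^k \mathtt{a_i}p^i$. Since $0\le \mathtt{a_i}\le p-1$, we have $\sum_{i<j}\mathtt{a_i}p^i<p^j$, so
$$\left\lfloor n/p^j\right\rfloor=\sum_{i\ge j}\mathtt{a_i}p^{i-j}.$$
Summing over $j\ge1$ and swapping the order of summation gives
$$\sum_{i=0}^k \mathtt{a_i}\sum_{j=1}^{i}p^{i-j}=\sum_{i=0}^k \mathtt{a_i}\,\frac{p^i-1}{p-1}=\frac{n-\sum_i \mathtt{a_i}}{p-1}.$$

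The only point needing care is the floor computation: the discarded lower digits must contribute strictly less than $p^j$, and this holds precisely because the digits satisfy $\mathtt{a_i}\le p-1$. The remaining steps are finite interchanges of summation.
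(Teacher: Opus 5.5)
Your proof is correct and complete. The double count of multiples of $p^j$ gives the floor-sum form. The bound $\sum_{i<j}\mathtt{a_i}p^i\le p^j-1$ justifies $\lfloor n/p^j\rfloor=\sum_{i\ge j}\mathtt{a_i}p^{i-j}$, and the $i=0$ term correctly contributes $(p^0-1)/(p-1)=0$.

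The paper gives no proof of this lemma; it quotes the formula as classical with a citation to Legendre, so there is no argument of its own to compare against. Yours is the standard textbook proof. It establishes both forms the paper relies on later: the digit-sum form in Lemmas~\ref{lem:carry} and~\ref{lem:anb}, and the floor-sum form in Proposition~\ref{prop:padic}.
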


This gives a formulation that is convenient for the carry arguments.

\begin{lemma}\label{lem:carry}
Start from the formal base-$p$ expression $(\mathtt{0\cdots 0}\,\mathtt{n})_p$, in which the last ``digit'' is allowed to be larger than $p-1$, and normalize it to the proper base-$p$ expansion of $n$ by elementary carries. If a carry of size one is counted each time $p$ is removed from one digit and $1$ is added to the next digit, then the total number of carries is $\nuu_p(n!)$.
\end{lemma}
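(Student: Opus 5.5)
We prove Lemma~\ref{lem:carry} by showing that the number of carries does not depend on the order in which they are performed, and then matching a canonical order with Lemma~\ref{lem:Legendre}.

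\emph{Invariant.} Attach to a formal expression $(\mathtt{c_k\cdots c_0})_p$, whose digits $c_i\ge 0$ may exceed $p-1$, the value $\sum_i c_i p^i$ and the digit sum $\sum_i c_i$. An elementary carry replaces $c_i$ by $c_i-p$ and $c_{i+1}$ by $c_{i+1}+1$. This leaves the value unchanged and lowers the digit sum by exactly $p-1$.

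\emph{Termination.} Each carry lowers the digit sum, which stays nonnegative, so the process ends after finitely many steps. It ends precisely when no digit is $\ge p$. The final expression then has value $n$ and all digits in $\{0,\dots,p-1\}$. By uniqueness of base-$p$ expansion, it is the expansion $(\mathtt{a_k\cdots a_0})_p$ of $n$.

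\emph{Counting.} The digit sum starts at $n$ (the single digit $\mathtt{n}$) and ends at $\sum_i \mathtt{a_i}$. Since it falls by $p-1$ per carry, the total number of carries is
$$\frac{n-\sum_{i=0}^k\mathtt{a_i}}{p-1}.$$
This equals $\nuu_p(n!)$ by Lemma~\ref{lem:Legendre}.

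\emph{Canonical order.} As a check, and to recover the other form of Legendre's formula, perform the carries position by position. From position $0$ we carry $\lfloor n/p\rfloor$ times, which leaves $\lfloor n/p\rfloor$ at position $1$. From position $1$ we then carry $\lfloor n/p^2\rfloor$ times, and so on. The total is $\sum_{j\ge1}\lfloor n/p^j\rfloor$, which agrees with the count above.

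There is no real obstacle. The one point needing care is that the count is well defined, i.e.\ independent of the order of carries. The digit-sum invariant settles this, since every order reaches the same final expansion and each carry costs exactly $p-1$ in digit sum.
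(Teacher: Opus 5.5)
Your proposal is correct and follows the paper's proof: each elementary carry lowers the digit sum by exactly $p-1$, and the digit sum goes from $n$ to $s_p(n)$, so Lemma~\ref{lem:Legendre} gives $\nuu_p(n!)$ carries. Your termination and uniqueness remarks and the position-by-position check recovering $\sum_{j\geq 1}\lfloor n/p^j\rfloor$ are correct additions, though not needed for the count.
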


\begin{proof}
The initial digit sum is $n$, whereas the digit sum of the proper base-$p$ expansion is $s_p(n):=\sum_i\mathtt{a_i}$. Each elementary carry decreases the digit sum by exactly $p-1$. Hence the number of carries is
$$\frac{n-s_p(n)}{p-1}=\nuu_p(n!)$$
by Lemma~\ref{lem:Legendre}.
\end{proof}

We also need a digitwise version as follows. For nonnegative integers $a,b$, if $n=(\mathtt{a_k\cdots a_0})_p$, let $c_p(an+b)$ be the number of elementary carries needed to normalize
$$((a\mathtt{a_k})(a\mathtt{a_{k-1}})\cdots(a\mathtt{a_1})(a\mathtt{a_0}+b))_p$$
to the proper base-$p$ expansion of $an+b$.

\begin{lemma}\label{lem:anb}
If $\nuu_p(n!)=\ell$, then
$$\nuu_p((an+b)!)=a\ell+c_p(an+b).$$
\end{lemma}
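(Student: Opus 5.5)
The plan is to use Lemma~\ref{lem:carry}, which says that $\nuu_p(N!)$ equals the number of elementary carries needed to normalize \emph{any} formal base-$p$ expression with value $N$ to its proper expansion. This holds because the argument in Lemma~\ref{lem:carry} only uses the fact that each carry lowers the digit sum by $p-1$. The initial digit sum may be any value $D$, and then the count is $(D-s_p(N))/(p-1)$. So I would first restate Lemma~\ref{lem:carry} in this form: if a formal expression $(\mathtt{d_k}\cdots\mathtt{d_0})_p$ with nonnegative digits has value $N$ and digit sum $D$, then normalizing it takes exactly $(D-s_p(N))/(p-1)$ elementary carries. In particular, this number does not depend on the order in which the carries are performed.

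I would then normalize $an+b$ in two stages. Start from the one-digit formal expression $(\mathtt{0\cdots0}\,(an+b))_p$, whose digit sum is $an+b$. In the first stage, bring it to the intermediate formal expression
$$((a\mathtt{a_k})(a\mathtt{a_{k-1}})\cdots(a\mathtt{a_1})(a\mathtt{a_0}+b))_p,$$
which also has value $an+b$. Since elementary carries are reversible bookkeeping steps, the number of carries in this stage is the drop in digit sum divided by $p-1$, namely
$$\frac{(an+b)-(a\,s_p(n)+b)}{p-1}=a\cdot\frac{n-s_p(n)}{p-1}=a\,\nuu_p(n!)=a\ell,$$
using Lemma~\ref{lem:Legendre}. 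Concretely, this stage can be realized by taking $a$ copies of the carry sequence that normalizes $n$ (per Lemma~\ref{lem:carry}, $\ell$ carries each), which shows that the intermediate expression is indeed reachable by genuine carries. The extra $b$ simply stays in the units digit.

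In the second stage, normalize the intermediate expression to the proper expansion of $an+b$. By definition this uses $c_p(an+b)$ carries. Adding the two stages, the total is $a\ell+c_p(an+b)$. By the order-independent count from the first paragraph, this total also equals $\nuu_p((an+b)!)$, which is what we want.

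The only subtle point, and the main obstacle, is justifying that the carry count is path-independent and that the intermediate expression is legitimately reachable. Both follow from the digit-sum invariant. Alternatively, one can bypass carries entirely by computing directly: $c_p(an+b)=\bigl(a\,s_p(n)+b-s_p(an+b)\bigr)/(p-1)$, and adding $a\ell=a\bigl(n-s_p(n)\bigr)/(p-1)$ gives $\bigl(an+b-s_p(an+b)\bigr)/(p-1)=\nuu_p((an+b)!)$. I would present this direct computation as the proof itself.
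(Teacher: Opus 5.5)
Your proposal is correct, and the direct computation you say you would present is exactly the paper's proof. There, $c_p(an+b)$ is written as $\bigl(a\,s_p(n)+b-s_p(an+b)\bigr)/(p-1)$ via the digit-sum invariant, and $a\ell=a\bigl(n-s_p(n)\bigr)/(p-1)$ is added using Legendre's formula; your two-stage carry narrative is a valid but unnecessary gloss on the same invariant.
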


\begin{proof}
Before normalization, the digit sum is $a s_p(n)+b$. Since every elementary carry lowers the digit sum by $p-1$, we have
$$s_p(an+b)=a s_p(n)+b-(p-1)c_p(an+b).$$
Applying Lemma~\ref{lem:Legendre} gives
\begin{align*}
\nuu_p((an+b)!)
&=\frac{an+b-s_p(an+b)}{p-1}\\
&=a\frac{n-s_p(n)}{p-1}+c_p(an+b)\\
&=a\nuu_p(n!)+c_p(an+b).
\end{align*}
\end{proof}

Combining (\ref{vp}) with Lemma~\ref{lem:anb} we have the following proposition.

\begin{proposition}\label{prop:carryTT}
For every prime $p$,
\[
\nuu_p(\TT_n)=c_p(3n+1)-c_p(2n+1)-c_p(n+1).
\]
\end{proposition}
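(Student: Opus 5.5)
The plan is to substitute Lemma~\ref{lem:anb} into each of the three factorial valuations in (\ref{vp}) and check that the terms proportional to $\nuu_p(n!)$ cancel. Set $\ell=\nuu_p(n!)$, with $n=(\mathtt{a_k\cdots a_0})_p$. Lemma~\ref{lem:anb} applies to any nonnegative integers $a,b$, so I will use it three times:
\[
\nuu_p((3n+1)!)=3\ell+c_p(3n+1),\qquad
\nuu_p((2n+1)!)=2\ell+c_p(2n+1),\qquad
\nuu_p((n+1)!)=\ell+c_p(n+1).
\]
Here $c_p(3n+1)$, $c_p(2n+1)$ and $c_p(n+1)$ are the carry counts obtained by normalizing $((3\mathtt{a_k})\cdots(3\mathtt{a_0}+1))_p$, $((2\mathtt{a_k})\cdots(2\mathtt{a_0}+1))_p$ and $(\mathtt{a_k}\cdots(\mathtt{a_0}+1))_p$, respectively.

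Substituting these into (\ref{vp}) gives
\[
\nuu_p(\TT_n)=(3\ell+c_p(3n+1))-(2\ell+c_p(2n+1))-(\ell+c_p(n+1)).
\]
The $\ell$-terms cancel because $3-2-1=0$, which leaves exactly the claimed identity.

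There is no real obstacle. The only points to confirm are that (\ref{vp}) is the correct valuation of $\TT_n=\frac{(3n+1)!}{(n+1)!\,(2n+1)!}$, which follows from rewriting $\frac{1}{n+1}\binom{3n+1}{n}$ in that form, and that Lemma~\ref{lem:anb} is valid for $a=1,2,3$ and $b=1$. The lemma's proof uses only Legendre's formula and the digit-sum bookkeeping, so it holds for arbitrary $a,b\ge0$.
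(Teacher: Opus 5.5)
Your proposal is correct and is the same argument as the paper's proof. Set $\ell=\nu_p(n!)$, apply Lemma~\ref{lem:anb} with $(a,b)=(3,1),(2,1),(1,1)$, substitute into (\ref{vp}), and the $\ell$-terms cancel because $3-2-1=0$.
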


\begin{proof}
If $\ell=\nuu_p(n!)$, then Lemma~\ref{lem:anb} gives
\begin{align*}
\nuu_p(\TT_n)
&=(3\ell+c_p(3n+1))-(2\ell+c_p(2n+1))-(\ell+c_p(n+1))\\
&=c_p(3n+1)-c_p(2n+1)-c_p(n+1).
\end{align*}
\end{proof}

For what follows, we refer to the modulus $p^m$ as \emph{the $m$-th $p$-adic scale} (or simply the \emph{scale $p^m$}). For example, a condition at scale $125$ means that we are considering modulo $125$.

\section{The case \texorpdfstring{$p=2$}{p=2}}\label{sec:p2}

We first determine exactly when $\TT_n$ is odd. 
We will show that the nonzero indices in $\TT_n\bmod2=1,0,1,0,1,0,0,0,1,0,1,0,0,0,0,0,1,0,1,0,1,0,0, \dots$ are the even Fibbinary integers
$0,2,4,8,10,16,18,20, \ldots.$
\begin{theorem}\label{thm:mod2}
The number $\TT_n$ is odd if and only if $n$ is even and its binary expansion contains no consecutive $1$'s. Equivalently, $n$ is an even Fibbinary integer.
\end{theorem}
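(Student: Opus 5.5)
We work from Proposition~\ref{prop:carryTT} with $p=2$, so that
\[
\nuu_2(\TT_n)=c_2(3n+1)-c_2(2n+1)-c_2(n+1),
\]
and $\TT_n$ is odd exactly when this quantity vanishes. In fact it is more convenient to go back to \eqref{vp} and Legendre's formula in the form $\nuu_2(N!)=N-s_2(N)$. This gives
\[
\nuu_2(\TT_n)=(3n+1-s_2(3n+1))-(2n+1-s_2(2n+1))-(n+1-s_2(n+1)),
\]
which simplifies to
\[
\nuu_2(\TT_n)=s_2(2n+1)+s_2(n+1)-s_2(3n+1)-1.
\]
Since $s_2(2n+1)=s_2(n)+1$, the criterion becomes
\[
\TT_n \text{ is odd}\iff s_2(3n+1)=s_2(n)+s_2(n+1).
\]
By Kummer's theorem, $s_2(a)+s_2(b)-s_2(a+b)$ counts the carries in the binary addition $a+b$. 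Writing $3n+1=(2n)+(n+1)$ and using $s_2(2n)=s_2(n)$, the criterion says exactly that the binary addition of $2n$ and $n+1$ has no carries. Equivalently, $2n$ and $n+1$ have disjoint binary supports, i.e.\ $(2n)\,\&\,(n+1)=0$.

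It then remains to analyse when $2n$ and $n+1$ are bitwise disjoint, and we split on the parity of $n$.

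\textbf{Case $n$ odd.} Write $n=(w\,0\,1^k)_2$ with $k\ge1$, so that $n+1=(w\,1\,0^k)_2$ and $2n=(w\,0\,1^k\,0)_2$. Comparing the two numbers bit by bit:
\begin{itemize}
\item if $k\ge2$, bit position $k$ of $2n$ equals bit $k-1$ of $n$, which is $1$, while bit $k$ of $n+1$ is also $1$, so the supports meet;
\item if $k=1$, bit position $1$ of $2n$ equals bit $0$ of $n$, which is $1$, while $n+1=(w\,1\,0)_2$ also has bit $1$ equal to $1$, so the supports meet.
\end{itemize}
Hence odd $n$ always produce a carry, and $\TT_n$ is even.

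\textbf{Case $n$ even.} Now $n+1$ is just $n$ with bit $0$ set, and bit $0$ of $2n$ is $0$. The condition therefore reduces to $(2n)\,\&\,n=0$, which says precisely that $n$ has no two consecutive $1$'s, i.e.\ $n$ is Fibbinary. This gives both directions of the theorem.

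The main obstacle is the reduction to the carry-free condition, specifically keeping the $\pm1$ offsets straight. One could instead work directly with the normalization counts $c_2$ of Lemma~\ref{lem:anb}, but the Legendre digit-sum route avoids that bookkeeping. The step most prone to error is the odd case with $k=1$, which I would double-check against small values such as $n=1,3,5$, where $\TT_n=2,30,728$ are all even.
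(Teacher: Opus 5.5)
Your proof is correct and follows essentially the paper's route: both use Legendre's formula to reduce $\nu_2(\TT_n)$ to the extra carries produced by building $3n+1$ as $2n+(n+1)$, then split on the parity of $n$. Indeed, the paper's quantity $d_2(3n+1)-c_2(n+1)$ equals your $s_2(n)+s_2(n+1)-s_2(3n+1)$. Your use of Kummer's theorem to restate this as the disjointness of the binary supports of $2n$ and $n+1$ is tidier and makes both directions fully explicit. The split into $k\ge 2$ and $k=1$ is unnecessary, since bit $k$ is shared by $2n$ and $n+1$ in either case.
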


\begin{proof}
Write $n=(\mathtt{a_k a_{k-1}\cdots a_1a_0})_2.$
By Proposition~\ref{prop:carryTT}, $\TT_n$ is odd if and only if
\[
c_2(3n+1)=c_2(2n+1)+c_2(n+1).
\]
Before the final normalization, the three relevant binary expressions can be written as
\begin{align*}
n+1&=(\mathtt{a_k a_{k-1}\cdots a_1}(\mathtt{a_0}+1))_2,\\
2n+1&=(\mathtt{a_k a_{k-1}\cdots a_1a_0 1})_2,\\
3n+1&=(\mathtt{a_k}(\mathtt{a_k}+\mathtt{a_{k-1}})\cdots
(\mathtt{a_1}+\mathtt{a_0})(\mathtt{a_0}+1))_2.
\end{align*}

Let $s_2(n)$ be the binary digit sum of $n$, and let $d_2(3n+1)$ denote the number of carries needed to normalize the last displayed expression for $3n+1$. From the formal expressions in the definition of $c_2$, we have
$c_2(2n+1)=s_2(n)$ and $c_2(3n+1)=s_2(n)+d_2(3n+1)$.
Hence from Proposition 2.4 we have
$$\nu_2(\TT_n)=d_2(3n+1)-c_2(n+1).$$
Thus it suffices to compare the carries coming from the sums $a_i+a_{i-1}$ and the terminal digit $a_0+1$ with those occurring in $n+1$.

If $\mathtt{a_0}=0$ and no two consecutive digits of $n$ are both $1$, then all adjacent sums are at most $1$ and the last digit is $1$. Hence no additional carry occurs, and $\nuu_2(\TT_n)=0$.

Conversely, suppose that $n$ is not an even Fibbinary integer.  If $n$ is even, then $\mathtt{a_0}=0$ and some adjacent pair is $\mathtt{11}$. The corresponding digit in the expression for $3n+1$ is at least $2$, producing at least one additional carry that does not occur in $n+1$ or $2n+1$. Thus $\nuu_2(\TT_n)\geq1$, a contradiction.

Otherwise, if $n$ is odd, let $r\geq1$ be the number of trailing $1$'s in its binary expansion. Then adding $1$ to $n$ produces exactly $r$ carries. In the expression for $3n+1$, the terminal digit $\mathtt{a_0}+1=2$ starts a carry chain through these trailing $1$'s and forces at least one further carry at the first digit to their left. Consequently
\[
c_2(3n+1)>c_2(2n+1)+c_2(n+1),
\]
so $\nuu_2(\TT_n)\geq1$, again a contradiction.
Hence the result is proved.
\end{proof}

We next determine the record (left-to-right maximum) of the zero run sequence $\mathcal{R}_i^{(2)}$. 

\begin{theorem}\label{thm:Jacobsthal}
For $i\geq1$, the entry $\RR_i^{(2)}$ is a left-to-right maximum if and only if
$i=\FF_k$
for some $k\geq2$. Moreover,
\[
\RR_{\FF_k}^{(2)}=\JJ_{k-1}.
\]
\end{theorem}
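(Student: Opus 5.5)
By Theorem~\ref{thm:mod2}, the support of $\TT_n \bmod 2$ is exactly the set of even Fibbinary integers. I would list them increasingly as $s_0=0<s_1<s_2<\cdots$. Since consecutive support elements are distinct even numbers, every gap between them is nonempty. Hence the $i$-th zero run is exactly the gap between $s_{i-1}$ and $s_i$, and
\[
\RR_i^{(2)}=s_i-s_{i-1}-1 .
\]
Next, $n=2m$ is an even Fibbinary integer if and only if $m$ is Fibbinary, because appending a $0$ creates no new pair $\mathtt{11}$. So $s_i=2f_i$, where $f_0=0<f_1=1<f_2=2<f_3=4<\cdots$ are the Fibbinary integers, and therefore
\[
\RR_i^{(2)}=2(f_i-f_{i-1})-1 .
\]
The whole problem thus reduces to understanding the differences $\delta_i:=f_i-f_{i-1}$.

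The first key fact is the standard count: there are exactly $\FF_{j+2}$ Fibbinary integers in $[0,2^j)$. This follows by conditioning on the top bit, which gives the Fibonacci recursion. Consequently $f_{\FF_{j+2}}=2^j$, and $f_{\FF_{j+2}-1}=A_j$, where $A_j=(\mathtt{1010\cdots})_2$ is the largest Fibbinary integer with $j$ bits.

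The second key fact is self-similarity. For $\FF_{j+2}\le i<\FF_{j+3}$ we have $f_i=2^j+f_{i-\FF_{j+2}}$, since the bit below the leading $1$ must be $0$. Hence for $\FF_{j+2}<i<\FF_{j+3}$ we get $\delta_i=\delta_{i'}$ with $1\le i'=i-\FF_{j+2}<\FF_{j+1}$. So every gap not at an index $\FF_k$ repeats a gap from a strictly earlier and smaller block.

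Then I would compute the gap at $i=\FF_k$ with $k=j+2$:
\[
\RR^{(2)}_{\FF_k}=2(2^j-A_j)-1 .
\]
Summing the alternating geometric series gives $A_j=(2^{j+1}-1)/3$ for $j$ odd and $A_j=(2^{j+1}-2)/3$ for $j$ even. Both cases give
\[
2(2^j-A_j)-1=\frac{2^{j+1}-(-1)^{j+1}}{3}=\JJ_{j+1}=\JJ_{k-1},
\]
by the Binet form of the Jacobsthal numbers. As sanity checks, $k=2,3,4$ give $1,1,3$, which matches $\TT_n \bmod 2$.

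For the record statement I would induct on the block index $j$, proving the following claim. The maximum of $\RR^{(2)}_i$ over $i<\FF_{j+3}$ equals $\JJ_{j+1}$, and it is attained first at $i=\FF_{j+2}$. Inside the block $\FF_{j+2}<i<\FF_{j+3}$, every value equals some earlier value $\RR^{(2)}_{i'}$ with $i'<\FF_{j+1}$. By induction this is at most $\JJ_j$, which is strictly less than the running maximum $\JJ_{j+1}$ whenever $j\ge 2$, because $\JJ$ is strictly increasing from index $2$ on. So no such $i$ is a record. Conversely, each $\FF_k$ is a record, since $\JJ_{k-1}$ is at least every earlier value. The tie $\JJ_1=\JJ_2=1$ makes both $i=1$ and $i=2$ records, which is consistent with ties being allowed. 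The case $j=1$ is degenerate, because that block contains only $i=2=\FF_3$.

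The main obstacle is bookkeeping rather than ideas. One must get the index alignment exactly right: the run numbering against $s_i$, the shift $k=j+2$, and the distinct Fibonacci indices $\FF_1=\FF_2=1$ at small $k$. One must also check strictness in the induction carefully so that the non-Fibonacci indices are excluded even though ties are permitted.
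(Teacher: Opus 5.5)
Your proof is correct and follows essentially the same route as the paper: the identity $f_{\FF_{j+2}+t}=2^j+f_t$ is the paper's decomposition $S_m=S_{m-1}\cup(2^{m-1}+S_{m-2})$ halved and read at the level of run indices, your terminal gap $2(2^j-A_j)-1$ is the paper's $2^m-1-\max S_m=\JJ_m$ with $m=j+1$, and the record positions come from the same Fibonacci count of the support, with your version spelling out more explicitly why non-Fibonacci indices are not records. One cosmetic slip: the clause ``attained first at $i=\FF_{j+2}$'' is false for $j=1$, where the value $1$ is first attained at $i=1$; however, you never use that clause, and you already account for the tie $\JJ_1=\JJ_2$.
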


\begin{proof}
For $m\geq1$, let $S_m$ be the set of integers $0\leq n<2^m$ for which $\TT_n$ is odd. By Theorem~\ref{thm:mod2}, $S_m$ is the set of binary words of length $m$ that end in $0$ and contain no consecutive $1$'s. Hence
$|S_m|=\FF_{m+1}$.

These \emph{admissible words} satisfy the recursive decomposition
$$S_m=S_{m-1}\ \cup\ \bigl(2^{m-1}+S_{m-2}\bigr)$$
when $m\ge 3$.

Let us consider $\max S_m$. The largest admissible word is obtained greedily by alternating $1$'s and $0$'s from the most significant digit (subject to the final digit being $0$). A direct calculation gives
$$2^m-1-\max S_m=\frac{2^m-(-1)^m}{3}=\JJ_m.$$
Thus the terminal zero run in $0\leq n<2^m$ has length $\JJ_m$.

Inductively the above decomposition shows that every earlier zero run has length at most $\JJ_{m-1}$, while the gap between the two displayed blocks has length exactly $\JJ_{m-1}$. Since $\JJ_m>\JJ_{m-1}$ for $m\geq3$, the terminal run at scale $2^m$ is the new record run. Its position in the zero-run sequence equals the number of nonzero entries in $0\leq n<2^m$, which is $\FF_{m+1}$. Therefore
$$\RR_{\FF_{m+1}}^{(2)}=\JJ_m$$
and the result follows from the initials $\RR_1^{(2)}=\RR_2^{(2)}=1$.
\end{proof}

The recursive decomposition leads to the complete spectrum $Z_{2,m}(y)$.

\begin{theorem}\label{thm:spec2}
We have $Z_{2,1}(y)=y$, and for $m\geq2$,
\[
Z_{2,m}(y)=\FF_m y+\sum_{j=3}^{m-1}\FF_{m-j}y^{\JJ_j}+y^{\JJ_m}.
\]
\end{theorem}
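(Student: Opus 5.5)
The plan is to derive a two-term recurrence for $Z_{2,m}(y)$ from the decomposition $S_m=S_{m-1}\cup\bigl(2^{m-1}+S_{m-2}\bigr)$ used in the proof of Theorem~\ref{thm:Jacobsthal}, and then verify the closed form by induction on $m$. The base cases come straight from Theorem~\ref{thm:mod2}. For $m=2$ the word $\TT_0,\dots,\TT_3\bmod 2$ is $1,0,1,0$, so $Z_{2,2}=2y=\FF_2y+y^{\JJ_2}$. For $m=3$ the word is $1,0,1,0,1,0,0,0$, so $Z_{2,3}=2y+y^3=\FF_3y+y^{\JJ_3}$. The case $m=1$ is immediate.

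For $m\ge 4$, split the window $0\le n<2^m$ into three parts.
\begin{enumerate}
\item \emph{First half} $[0,2^{m-1})$. Its zero/nonzero pattern is exactly that of $S_{m-1}$. Its terminal run, of length $\JJ_{m-1}$, ends at $2^{m-1}-1$. It is closed off at $2^{m-1}$, because $0\in S_{m-2}$ gives $2^{m-1}\in S_m$. So every run of the first half survives unchanged, and the first half contributes exactly $Z_{2,m-1}(y)$.
\item \emph{Block} $[2^{m-1},2^{m-1}+2^{m-2})$. This is a translated copy of the window for $S_{m-2}$.
\item \emph{Final stretch} $[2^{m-1}+2^{m-2},2^m)$. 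This consists entirely of zeros, since an admissible word with leading digit $1$ at position $m-1$ must have digit $0$ at position $m-2$.
\end{enumerate}

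The runs of the copy of $S_{m-2}$ survive, except its terminal run of length $\JJ_{m-2}$. That run merges with the final $2^{m-2}$ zeros into a run of length $\JJ_{m-2}+2^{m-2}$. Using $\JJ_{m-1}+\JJ_{m-2}=2^{m-2}$ and the recursion $\JJ_m=\JJ_{m-1}+2\JJ_{m-2}$, this length equals $\JJ_m$, consistent with Theorem~\ref{thm:Jacobsthal}. Hence
\[
Z_{2,m}(y)=Z_{2,m-1}(y)+Z_{2,m-2}(y)-y^{\JJ_{m-2}}+y^{\JJ_m}\qquad(m\ge4).
\]

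For the induction step, substitute the claimed formulas for $Z_{2,m-1}$ and $Z_{2,m-2}$, treating the terminal term $y^{\JJ_{m-2}}$ of $Z_{2,m-2}$ formally as a separate summand, which is then cancelled. This formal treatment is what makes $m=4$ work despite $\JJ_2=\JJ_1=1$. The coefficients then check as follows.
\begin{enumerate}
\item The coefficient of $y$ is $\FF_{m-1}+\FF_{m-2}=\FF_m$.
\item For $3\le j\le m-3$, the coefficient of $y^{\JJ_j}$ is $\FF_{m-1-j}+\FF_{m-2-j}=\FF_{m-j}$.
\item For $j=m-2$, the only surviving contribution is the coefficient $\FF_1=1$ from the sum inside $Z_{2,m-1}$, and this equals $\FF_2$.
\item For $j=m-1$, the coefficient is the terminal term of $Z_{2,m-1}$, which is $1=\FF_1$.
\item Finally, $y^{\JJ_m}$ appears with coefficient $1$.
\end{enumerate}
This matches the claimed formula. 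Since the $\JJ_j$ for $j\ge3$ are distinct and greater than $1$, no further collisions occur.

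The main obstacle is justifying the run structure at the two seams: that the first half's terminal run is cut at $2^{m-1}$, and that the copy of $S_{m-2}$ keeps its internal runs intact while only its terminal run merges with the final zeros. Both follow from $0\in S_{m-2}$ and from the description of $S_m$ as admissible words in Theorem~\ref{thm:mod2}. I would state them explicitly as a small lemma before running the coefficient bookkeeping.
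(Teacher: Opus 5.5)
Your proposal is correct and matches the paper's proof: the same decomposition $S_m=S_{m-1}\cup(2^{m-1}+S_{m-2})$, the same recurrence $Z_{2,m}=Z_{2,m-1}+Z_{2,m-2}-y^{\JJ_{m-2}}+y^{\JJ_m}$, and induction on $m$. You add detail the paper leaves implicit: the cut at $2^{m-1}$, the identity $\JJ_{m-2}+2^{m-2}=\JJ_m$, and the coefficient bookkeeping. Starting the recurrence at $m\ge4$ with $m=2,3$ checked directly, rather than at $m\ge3$, is harmless.
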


\begin{proof}
For $m\geq3$, use the decomposition
$S_m=S_{m-1}\cup\bigl(2^{m-1}+S_{m-2}\bigr).$
The zero runs determined by the first block contribute $Z_{2,m-1}(y)$. In the second block, all nonterminal runs are copies of the runs at scale $2^{m-2}$. The terminal run of length $\JJ_{m-2}$ at that smaller scale is not preserved as a terminal run; it merges with the final part of the interval and becomes the new terminal run of length $\JJ_m$. Therefore
$$Z_{2,m}(y)=Z_{2,m-1}(y)+Z_{2,m-2}(y)-y^{\JJ_{m-2}}+y^{\JJ_m}$$
and the recurrence yields the result by induction on $m$,
with the initials cases $Z_{2,1}(y)=y$, $Z_{2,2}(y)=2y$, $Z_{2,3}(y)=2y+y^3$.
\end{proof}

\begin{corollary}\label{cor:p2gf}
The bivariate generating function in the scale variable is
\[
\sum_{m\geq1}Z_{2,m}(y)t^m
=\frac{1-t^2}{1-t-t^2}\sum_{j\geq1}y^{\JJ_j}t^j.
\]
\end{corollary}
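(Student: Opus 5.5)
The plan is to compare coefficients of $t^m$ on both sides and use the closed form of Theorem~\ref{thm:spec2}; this is simpler than rederiving anything from the recurrence for $Z_{2,m}(y)$.

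First I expand the rational factor. Since $\frac{1}{1-t-t^2}=\sum_{n\ge0}\FF_{n+1}t^n$, multiplying by $1-t^2$ gives
\[
\frac{1-t^2}{1-t-t^2}=\sum_{n\ge0}c_nt^n,\qquad c_n=\FF_{n+1}-\FF_{n-1}\ (n\ge1),\quad c_0=1.
\]
Here $c_n=\FF_n$ for all $n\ge1$, by the Fibonacci recurrence; for $n=1$ this reads $\FF_2-\FF_0=1=\FF_1$.

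Next I take the Cauchy product with $\sum_{j\ge1}y^{\JJ_j}t^j$. The coefficient of $t^m$, for $m\ge1$, is
\[
\sum_{j=1}^{m}c_{m-j}\,y^{\JJ_j}=y^{\JJ_m}+\sum_{j=1}^{m-1}\FF_{m-j}\,y^{\JJ_j}.
\]
For $m=1$ this is $y^{\JJ_1}=y=Z_{2,1}(y)$.

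For $m\ge2$, I use $\JJ_1=\JJ_2=1$. The terms with $j=1,2$ then combine to $(\FF_{m-1}+\FF_{m-2})y=\FF_m y$. When $m=2$, only $j=1$ occurs in the sum, giving $\FF_1y$, and then $\JJ_2=1$ makes the total $2y=\FF_2y+y^{\JJ_2}$. In every case the coefficient equals $\FF_m y+\sum_{j=3}^{m-1}\FF_{m-j}y^{\JJ_j}+y^{\JJ_m}$, which is $Z_{2,m}(y)$ by Theorem~\ref{thm:spec2}.

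There is no real obstacle here. The only points needing care are the boundary indices: $c_0=1$ rather than $\FF_0$, and the merging of the $j=1,2$ terms into the linear coefficient $\FF_m$ because $\JJ_1=\JJ_2$.
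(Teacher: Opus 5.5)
Your proof is correct, but it takes a different route from the paper's. The paper derives the generating function from the recurrence $Z_{2,m}(y)=Z_{2,m-1}(y)+Z_{2,m-2}(y)-y^{\JJ_{m-2}}+y^{\JJ_m}$ in the proof of Theorem~\ref{thm:spec2}. It multiplies by $t^m$, sums over $m\ge3$, and inserts $Z_{2,1}(y)=y$ and $Z_{2,2}(y)=2y$. The boundary terms then cancel, again because $\JJ_1=\JJ_2=1$, and one reads off $(1-t-t^2)\sum_{m\ge1}Z_{2,m}(y)t^m=(1-t^2)\sum_{j\ge1}y^{\JJ_j}t^j$ directly. You instead take the closed form of Theorem~\ref{thm:spec2} as given and match it coefficient by coefficient against the right-hand side, using that the coefficient of $t^n$ in $\frac{1-t^2}{1-t-t^2}$ is $1$ for $n=0$ and $\FF_n$ for $n\ge1$.

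The paper's route is a genuine derivation: it needs only the structural recurrence and two initial values, and it would produce the closed form rather than presuppose it. Yours is a verification that depends on the closed form, which the paper itself obtained from that same recurrence by induction. In exchange, your comparison is completely transparent and reversible, so it shows the corollary and the closed form of Theorem~\ref{thm:spec2} are equivalent statements.

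Your edge cases are handled correctly: $c_0=1$, the merging of the $j=1,2$ terms into $\FF_m y$, and the separate check at $m=2$. One small wording point: the merging step as written applies for $m\ge3$, not $m\ge2$, which your separate $m=2$ check already covers.
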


\begin{proof}
This can be done by simple calculation: just multiply the recurrence in the proof of Theorem~\ref{thm:spec2} by $t^m$, sum over $m\geq3$, and use the three initial values above.
\end{proof}
We will see later that for $p\ge 3$, there is no such clean form as above.


\section{The case \texorpdfstring{$p=3$}{p=3}}\label{sec:p3}

We will see the $p=3$ case has a different structure and the support can also be characterized explicitly in terms of the base-$3$ digits. For this, we look closely at the carry mechanism.

\begin{lemma}[Ternary carry lemma] \label{lem:ternarycarry}
Let $n=(\mathtt{a_k a_{k-1}\cdots a_1a_0})_3$ with
$\mathtt{a_i}\in\{0,1,2\}$, and let $r$ be the number of trailing $2$'s in the base-$3$ expansion of $n$ (thus
$\mathtt{a_0}=\cdots=\mathtt{a_{r-1}}=2$, and either $r=k+1$, or $\mathtt{a_r}\in\{0,1\}$).
Define $\epsilon_0,\epsilon_1,\ldots,\epsilon_k\in\{0,1\}$ recursively by
$\epsilon_0=0 (\text{if } \mathtt{a_0}=0) \text{or } \epsilon_0=1 (\text{if } \mathtt{a_0}=1\text{ or }2$), and for $i\geq1$,
$$\epsilon_i=
\begin{cases}
0,&\mathtt{a_i}=0,\\
\epsilon_{i-1},&\mathtt{a_i}=1,\\
1,&\mathtt{a_i}=2.
\end{cases}
$$
Then $$\nuu_3(\TT_n)=\sum_{i=0}^k\mathtt{a_i}-\sum_{i=0}^k\epsilon_i-r=\sum_{i=r}^k(\mathtt{a_i}-\epsilon_i),$$
where the last sum is empty when $r=k+1$. 
\end{lemma}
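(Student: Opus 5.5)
The plan is to bypass the carry bookkeeping of Proposition~\ref{prop:carryTT} and work directly with base-$3$ digit sums through Legendre's formula (Lemma~\ref{lem:Legendre}). Write $s=s_3$ for the ternary digit sum. By (\ref{vp}) and Lemma~\ref{lem:Legendre} with $p-1=2$, the linear terms $(3n+1)-(2n+1)-(n+1)=-1$ combine to give
\[
2\,\nuu_3(\TT_n)=s(2n+1)+s(n+1)-s(3n+1)-1 .
\]
So the whole problem reduces to computing the three digit sums $s(3n+1)$, $s(n+1)$ and $s(2n+1)$ in terms of the digits $\mathtt{a_i}$.

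The first two are immediate. The expansion of $3n+1$ is that of $n$ shifted one place with a final digit $1$, so $s(3n+1)=s(n)+1$. Adding $1$ to $n$ turns its $r$ trailing $2$'s into $0$'s and raises the next digit by $1$; this is valid also when $r=k+1$, where a new leading digit $1$ appears. Hence $s(n+1)=s(n)-2r+1$. Substituting both gives
\[
2\,\nuu_3(\TT_n)=s(2n+1)-2r-1 .
\]

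The main step is $s(2n+1)$. Take the formal expression $((2\mathtt{a_k})\cdots(2\mathtt{a_1})(2\mathtt{a_0}+1))_3$, whose digit sum is $2s(n)+1$, and normalize it from right to left. Each elementary carry lowers the digit sum by $2$, and since every formal digit is at most $2\cdot2+1=5$, each position emits at most one carry. I will prove by induction on $i$ that the carry out of position $i$ is exactly $\epsilon_i$.
\begin{itemize}
\item At position $0$ the digit is $2\mathtt{a_0}+1$, which is at least $3$ iff $\mathtt{a_0}\ge1$.
\item At position $i\ge1$ the digit is $2\mathtt{a_i}+\epsilon_{i-1}$, which is at least $3$ iff $\mathtt{a_i}=2$, or $\mathtt{a_i}=1$ and $\epsilon_{i-1}=1$.
\end{itemize}
These are precisely the recursive rules defining $\epsilon_i$. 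A carry out of the top position simply creates a new leading digit and is still counted. Therefore
\[
s(2n+1)=2s(n)+1-2\sum_{i=0}^k\epsilon_i ,
\]
and substituting yields $\nuu_3(\TT_n)=\sum_i\mathtt{a_i}-\sum_i\epsilon_i-r$.

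For the second equality, note that for $i<r$ we have $\mathtt{a_i}=2$, hence $\epsilon_i=1$ by the recursion, and so $\mathtt{a_i}-\epsilon_i=1$. The first $r$ terms of $\sum_i(\mathtt{a_i}-\epsilon_i)$ therefore contribute exactly $r$, which cancels the $-r$ and leaves $\sum_{i=r}^k(\mathtt{a_i}-\epsilon_i)$. The only step needing care is the carry induction; the point to check is that an incoming carry is at most $1$, which keeps each formal digit below $6$.
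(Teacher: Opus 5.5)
Your proof is correct and is essentially the paper's argument: the decisive step in both is that the carry leaving position $i$ when normalizing $((2\mathtt{a_k})\cdots(2\mathtt{a_1})(2\mathtt{a_0}+1))_3$ is exactly $\epsilon_i$. The only difference is cosmetic. You apply Lemma~\ref{lem:Legendre} directly and read off $s_3(3n+1)=s_3(n)+1$ and $s_3(n+1)=s_3(n)-2r+1$, whereas the paper goes through Proposition~\ref{prop:carryTT} and computes the equivalent carry counts $c_3(3n+1)=\sum_i\mathtt{a_i}$ and $c_3(n+1)=r$.
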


\begin{proof}
We compute the three terms in Proposition~\ref{prop:carryTT} separately:
\begin{enumerate}
\item First we consider $3n+1$. Before normalization we have
$3n+1=((3\mathtt{a_k})(3\mathtt{a_{k-1}})\cdots (3\mathtt{a_1})(3\mathtt{a_0}+1))_3.$
At the units digit, the outgoing carry is $\mathtt{a_0}$. Inductively, if the carry entering the $i$-th digit is $\mathtt{a_{i-1}}$, then
$3\mathtt{a_i}+\mathtt{a_{i-1}}$
produces the outgoing carry $\mathtt{a_i}$ since $0\leq\mathtt{a_{i-1}}\leq2$. Hence
$$c_3(3n+1)=\sum_{i=0}^k\mathtt{a_i}.$$

\item Next we consider $2n+1$. Let $\epsilon_i$ be the carry leaving the $i$-th digit during the normalization of
$((2\mathtt{a_k})(2\mathtt{a_{k-1}})\cdots (2\mathtt{a_1})(2\mathtt{a_0}+1))_3.$
Since every incoming carry is either $0$ or $1$, we have
$\epsilon_0=\left\lfloor\frac{2\mathtt{a_0}+1}{3}\right\rfloor$
and, for $i\geq1$, 
$\epsilon_i=\left\lfloor \frac{2\mathtt{a_i}+\epsilon_{i-1}}{3}\right\rfloor$.
These formulas are exactly the recursion in the statement and therefore
$$c_3(2n+1)=\sum_{i=0}^k\epsilon_i.$$
\item Finally we consider $n+1$. In the normalization of $n+1$, a carry leaves the $i$-th digit precisely when
$\mathtt{a_0}=\mathtt{a_1}=\cdots=\mathtt{a_i}=2$.
Consequently,
$$c_3(n+1)=r.$$
\end{enumerate}

Applying proposition~\ref{prop:carryTT} we have
$\nuu_3(\TT_n)=\sum_{i=0}^k\mathtt{a_i}-\sum_{i=0}^k\epsilon_i-r$. However for $0\leq i<r$, we have
$\mathtt{a_i}=2$, $\epsilon_i=1$,
so the contribution of these $r$ digits to
$\sum_i(\mathtt{a_i}-\epsilon_i)-r$
is zero. Hence
$$\nuu_3(\TT_n)=\sum_{i=r}^k(\mathtt{a_i}-\epsilon_i).$$
Note that each summand on the right is nonnegative (if $\mathtt{a_i}=0$, then $\epsilon_i=0$; if $\mathtt{a_i}=1$, then $\epsilon_i\in\{0,1\}$; and if $\mathtt{a_i}=2$, then $\epsilon_i=1$) and the lemma is proved.

\end{proof}

\begin{theorem}\label{thm:mod3}
The number $\TT_n$ is not divisible by $3$ if and only if the base-$3$ representation of $n$ is weakly increasing from left to right. More precisely,
\[
\TT_n\equiv
\begin{cases}
1\pmod3,& n=0\text{ or }n=(\mathtt{2\cdots2})_3,\\
2\pmod3,& n=(\underbrace{\mathtt{1\cdots1}}_{i}\underbrace{\mathtt{2\cdots2}}_{j})_3 \text{ with }i\geq1,\ j\geq0,\\
0\pmod3,&\text{otherwise.}
\end{cases}
\]
\end{theorem}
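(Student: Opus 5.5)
The plan is to derive the support from Lemma~\ref{lem:ternarycarry}, and then the residues from the classical ``Wilson-type'' refinement of Legendre's formula: for a prime $p$ and $N=(\mathtt{b_t\cdots b_0})_p$,
\[
\frac{N!}{p^{\nuu_p(N!)}}\equiv(-1)^{\nuu_p(N!)}\prod_{i}\mathtt{b_i}!\pmod p .
\]
I would quote this, or prove it quickly by induction from $N!=\bigl(\prod_{p\nmid k\le N}k\bigr)\,p^{\lfloor N/p\rfloor}\lfloor N/p\rfloor!$ and Wilson's theorem.

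\textbf{Support.} By Lemma~\ref{lem:ternarycarry}, $\nuu_3(\TT_n)=\sum_{i\ge r}(\mathtt{a_i}-\epsilon_i)$, and every summand is nonnegative. So $3\nmid\TT_n$ if and only if $\mathtt{a_i}=\epsilon_i$ for all $i\ge r$. Since $\epsilon_i\in\{0,1\}$, no digit above the trailing block of $2$'s can equal $2$; indeed $\mathtt{a_r}\ne2$ by the definition of $r$.
\begin{itemize}
\item A digit $\mathtt{a_i}=0$ always satisfies $\epsilon_i=0$, so it imposes no condition.
\item A digit $\mathtt{a_i}=1$ with $i\ge r$ requires $\epsilon_{i-1}=1$, or $i=0$, where $\epsilon_0=1$ automatically.
\end{itemize}
Reading upward from position $r$, the value $\epsilon_{i-1}$ equals $1$ exactly when the digit below is a $2$ from the trailing block, or a $1$ that itself already has $\epsilon=1$. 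Any $0$ resets $\epsilon$ to $0$, and after that no $1$ may appear.

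Hence, above the trailing $2$'s, the admissible nonzero digits form a block of $1$'s followed only by leading zeros. Therefore $n=0$ or $n=(\mathtt{1}^i\mathtt{2}^j)_3$ with $i+j\ge1$, which are exactly the nonzero weakly increasing ternary words. The converse direction is the same check read backwards.

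\textbf{Residues.} For these $n$ I would write
\[
\TT_n=\frac{(3n+1)!\,n!}{(2n+1)!\,(n+1)!\,n!}\cdot\frac{1}{1},
\]
or more simply $\TT_n=\frac{(3n+1)!}{(2n+1)!\,(n+1)!}$. The factors of $3$ cancel exactly because $\nuu_3(\TT_n)=0$. Applying the Wilson-type formula to each factorial then gives
\[
\TT_n\equiv(-1)^{c}\,\frac{\prod(\text{digits of }3n+1)!}{\prod(\text{digits of }2n+1)!\ \prod(\text{digits of }n+1)!}\pmod3,
\]
where $c=\nuu_3((3n+1)!)-\nuu_3((2n+1)!)-\nuu_3((n+1)!)=0$, so the sign disappears. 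It remains to write down the digits in each case.
\begin{itemize}
\item $3n+1$ is the word of $n$ with a $1$ appended, so its digits are $\mathtt{1}^{i+1}\mathtt{2}^j$ in some order.
\item $n+1=(\mathtt{1}^{i-1}\mathtt{2}\,\mathtt{0}^j)_3$ when $i\ge1$, and $n+1=(\mathtt{1}\,\mathtt{0}^j)_3$ when $i=0$.
\item $2n+1$ is read off from the recursion for $\epsilon_i$: every $2$ of $n$ gives the digit $(4+\epsilon)\bmod 3$, and every $1$ preceded by carry $1$ gives the digit $0$.
\end{itemize}
Counting the factors $2!=2\equiv-1$ in these products then gives residue $1$ in the case $i=0$ (including $n=0$, where $\TT_0=1$) and residue $2$ in the case $i\ge1$.

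The main obstacle is this final bookkeeping: computing the exact digit string of $2n+1$ for $n=(\mathtt{1}^i\mathtt{2}^j)_3$, including the carry into a possible new leading digit, and getting the parity of the number of $2$'s right in each of the two cases. I would verify the resulting formula against small cases such as $n=1,2,4,5,8$ before finalizing.
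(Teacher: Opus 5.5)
Your proposal is correct. The support part is essentially the paper's argument: both read off $\mathtt{a_i}=\epsilon_i$ for $i\ge r$ from Lemma~\ref{lem:ternarycarry} and obtain the pattern $\mathtt{2}^r\mathtt{1}^s\mathtt{0}^t$ read from right to left.

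For the residues you take a different route. The paper works directly with the unit parts of the three factorials. For each of $(3n+1)!$, $(2n+1)!$, $(n+1)!$ it counts the integers whose last nonzero ternary digit is $\mathtt{2}$, obtaining closed forms such as $\frac{3^{i+1}-3-2i}{4}$ in three separate cases. It then checks that the signed difference is $-1$ or $0$. You instead invoke the Wilson-type formula $N!/p^{\nuu_p(N!)}\equiv(-1)^{\nuu_p(N!)}\prod_i\mathtt{b_i}!\pmod p$. Together with $\nuu_3(\TT_n)=0$, this reduces everything to counting digits equal to $\mathtt{2}$ in $3n+1$, $2n+1$ and $n+1$.

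The two methods agree in substance: the paper's count for $N!$ has the same parity as $\nuu_3(N!)$ plus the number of $\mathtt{2}$'s in $N$. Yours is local and uniform across cases, and avoids the geometric-sum evaluations the paper has to carry out. It would also transfer directly to other primes and other Raney numbers. The paper's version is self-contained and needs no quoted formula, at the cost of heavier computation.

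The bookkeeping you flag as the main obstacle is immediate. For $n=(\mathtt{1}^i\mathtt{2}^j)_3$, the carry into every digit of the trailing block of $\mathtt{2}$'s is $1$, so $2n+1=(\mathtt{1}\,\mathtt{0}^i\,\mathtt{2}^j)_3$. Also $3n+1=(\mathtt{1}^i\mathtt{2}^j\mathtt{1})_3$, and $n+1$ is as you wrote. The numbers of $\mathtt{2}$'s are therefore $j$, $j$ and $\ind_{i\ge1}$, which gives $\TT_n\equiv(-1)^{\ind_{i\ge1}}\pmod 3$, exactly the claimed residues.
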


\begin{proof}
To see when $\nuu_3(\TT_n)=0$ it suffices to determine when all summands in the last lemma are zeros. If $r>0$, the trailing block of $2$'s supplies an incoming carry to the next digit; if $r=0$ and $\mathtt{a_0}=1$, the added $1$ supplies the same initial carry. Thereafter a digit $1$ preserves an incoming carry, a digit $0$ kills it, and a digit $2$ creates a carry. Thus equality $\mathtt{a_i}-\epsilon_i=0 (i\geq r)$ forces the digits, read from right to left, to have the form
$\mathtt{2}^r\mathtt{1}^s\mathtt{0}^t$ for some $r,s,t\ge 0$ and the first part is proved.

It remains to determine the residue of $\TT_n$ modulo $3$. The key observation is the following. 
For a factorial $N!$, we remove all powers of $3$ from its factors and reduce the remaining unit parts modulo $3$. Now each unit part is $1$ or $2$, and the residue is determined by the parity of the number of factors whose $3$-free part is congruent to $2$, but in base $3$ these are exactly the positive integers whose last nonzero ternary digit is $2$. Now we look at the numbers whose base-$3$ expansion is weakly increasing case by case. 
\begin{enumerate}
\item First, let $n=(\mathtt{1\cdots1})_3$ with $i\geq1$ digits. Counting such unit parts in $(3n+1)!$, $(2n+1)!$, and $(n+1)!$ gives respectively
$\frac{3^{i+1}-3-2i}{4}$, $\frac{3^i-1}{2}$, and $\frac{3^i+3-2i}{4}$. One can see that their signed difference is $-1$, so $\TT_n\equiv2^{-1}\equiv2\pmod3$.
\item For $n=(\underbrace{\mathtt{1\cdots1}}_i\underbrace{\mathtt{2\cdots2}}_j)_3$ with $i,j\geq1$,
the corresponding three counts are $\frac{3^{i+j+1}+3^{j+1}-6-2i}{4}$, $\frac{3^{i+j}+3^j-2}{2}$, and $\frac{3^{i+j}+3^j+2-2i}{4}$.
Again their signed difference is $-1$, and hence $\TT_n\equiv2\pmod3$. 
\item For $n=(\mathtt{2\cdots2})_3$ with $i\geq1$ digits, the three counts are
$\frac{3^{i+1}-3}{2}$, $3^i-1$, and $\frac{3^i-1}{2}$, whose signed difference is $0$. Thus $\TT_n\equiv1\pmod3$. 
\end{enumerate}
Combining these cases completes the proof.
\end{proof}

For example, among $0\leq n<27$ the nonzero indices of $\TT_n \bmod 3$ are the ternary words of length three that are weakly increasing:
$\mathtt{000},\mathtt{001},\mathtt{002},\mathtt{011},\mathtt{012},\mathtt{022},\mathtt{111},\mathtt{112},\mathtt{122},\mathtt{222}$ and the corresponding indices are $0,1,2,4,5,8,13,14,17,26$.

We now read the zero runs directly from the ordered list of admissible ternary words.

\begin{theorem}\label{thm:Run3}
The set of values attained by left-to-right maxima of $\RR_i^{(3)}$ is
\[
\left\{\frac{3^r-1}{2}:r\geq1\right\}
\cup
\left\{3^r-1:r\geq1\right\}.
\]
\end{theorem}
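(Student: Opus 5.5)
The plan is to read the zero runs off Theorem~\ref{thm:mod3} as gaps between consecutive elements of the support, classify every gap length explicitly, and then locate where each length occurs for the first time. Fix $m$ and identify $0\le n<3^m$ with its length-$m$ ternary word, leading zeros allowed. Numeric order on these words is lexicographic order. By Theorem~\ref{thm:mod3}, the support in this range is the set of words $w=\mathtt{0}^a\mathtt{1}^b\mathtt{2}^c$ with $a+b+c=m$. Its largest element is $\mathtt{2}^m=3^m-1$, so there is no truncated terminal run. Every zero run is therefore the open gap between two consecutive admissible words, and its length is their numerical difference minus one.

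\textbf{Step 1: the successor of each admissible word.} To get the next admissible word after $w$, increment the rightmost digit that can still be increased and fill everything to its right with the new value of that digit; this keeps the word weakly increasing and is lexicographically minimal. This gives four cases.
\begin{enumerate}
\item If $b=c=0$, the successor of $\mathtt{0}^m$ is $\mathtt{0}^{m-1}\mathtt{1}$, so the gap is $0$.
\item If $c=0$ and $b\ge1$, the successor is $\mathtt{0}^a\mathtt{1}^{b-1}\mathtt{2}$, so the gap is $0$.
\item If $c\ge1$ and $b\ge1$, the successor is $\mathtt{0}^a\mathtt{1}^{b-1}\mathtt{2}^{c+1}$. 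The difference is $3^c$, so the run length is $3^c-1$.
\item If $c\ge1$, $b=0$ and $a\ge1$, the successor is $\mathtt{0}^{a-1}\mathtt{1}^{c+1}$. The difference is $\frac{3^{c+1}-1}{2}-(3^c-1)=\frac{3^c+1}{2}$, so the run length is $\frac{3^c-1}{2}$.
\end{enumerate}
Hence every zero run has length $\frac{3^c-1}{2}$ or $3^c-1$ for some $c\ge1$. In particular, every record value lies in the claimed set.

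\textbf{Step 2: first occurrences.} A run of length $\frac{3^c-1}{2}$ first appears right after $n=3^c-1=(\mathtt{2}^c)_3$. A run of length $3^c-1$ first appears right after $n=(\mathtt{1}\mathtt{2}^c)_3=2\cdot3^c-1$. For each of these runs I will check that no earlier run is longer.
\begin{itemize}
\item Every run before $n=3^c-1$ lies among the words of length $c$ other than $\mathtt{2}^c$. These words have at most $c-1$ trailing $2$'s, so every such run has length at most $3^{c-1}-1<\frac{3^c-1}{2}$.
\item Every run before $n=2\cdot 3^c-1$ arises from a word of length $c+1$ with a leading $\mathtt{0}$, or with a leading $\mathtt{1}$ and at most $c-1$ trailing $2$'s. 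Its length is therefore at most $\max\bigl(\frac{3^c-1}{2},\,3^{c-1}-1\bigr)<3^c-1$.
\end{itemize}
So each of these runs is a left-to-right maximum. This also shows that the record values interlace as
\[
1<2<4<8<13<26<\cdots,\qquad \frac{3^c-1}{2}<3^c-1<\frac{3^{c+1}-1}{2}.
\]

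\textbf{Expected obstacle.} The only delicate point is the bookkeeping in Step 1: the successor rule must be justified, and the boundary cases $a=0$ and $b=0$ must be handled so that no other gap length can occur. Once the four-case classification is secured, the two combine to give the theorem. Step 1 shows every record value is in the set, and Step 2 shows every element of the set is attained as a record; ties cause no trouble because records are defined with $\ge$.
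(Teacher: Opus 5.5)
Your proposal is correct and follows essentially the same route as the paper: both read the support off Theorem~\ref{thm:mod3} and compute every gap between consecutive admissible indices. Your type-3 gaps of length $3^c-1$ are the paper's internal gaps $u_{i,j}-u_{i,j-1}-1$, and your type-4 gaps of length $\frac{3^c-1}{2}$ are its cross-scale gaps from $u_{c,c}=3^c-1$ to $u_{c+1,0}$; the records are then read off from these. Your Step~2 explicitly checks that no earlier run exceeds each first occurrence, which makes rigorous the record verification that the paper only sketches.
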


\begin{proof}
By Theorem~\ref{thm:mod3}, the positive nonzero indices with exactly $i$ ternary digits are
$$u_{i,j}:=(\underbrace{\mathtt{1\cdots1}}_{i-j} \underbrace{\mathtt{2\cdots2}}_j)_3=\frac{3^i+3^j-2}{2}$$
for $0\leq j\leq i$.

The number of zeros between two consecutive nonzero indices $u_{i,j-1}$ and $u_{i,j}$.
For $j=1$ the gap has length $0$ and there is no zero run. 
For $2\le j\le i$ the corresponding zero-run length is $u_{i,j}-u_{i,j-1}-1=3^{j-1}-1$.

The gap between the last nonzero index of $i$ digits, namely $u_{i,i}=3^i-1$, and the first nonzero index of $i+1$ digits, namely $u_{i+1,0}=(3^{i+1}-1)/2$, has length
$u_{i+1,0}-u_{i,i}-1=\frac{3^i-1}{2}$.
These formulas list every zero run. As the scale increases, the new largest internal gap is $3^i-1$, and the cross-scale gap is $(3^i-1)/2$. Therefore, together they give exactly the stated record values.
\end{proof}

For example, by interlacing $\frac{3^r-1}{2}=1,4,13,40,\ldots$ and $3^r-1=2,8,26,80,\ldots$
we obtain the first distinct record values $1,2,4,8,13,26,40,80,\ldots$ of $\RR_i^{(3)}$.

We can also determine the record positions.
\begin{corollary}
For every $r\ge 1$, we have
$$R^{(3)}_{\frac{r(r+1)}{2}}=\frac{3^r-1}{2}, \qquad R^{(3)}_{\frac{r(r+3)}{2}}=3^r-1.$$
\end{corollary}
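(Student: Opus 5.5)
We count zero runs in order directly from the list of nonzero indices in the proof of Theorem~\ref{thm:Run3}. The support, in increasing order, is
\[
0,\;1=u_{1,0},\;2=u_{1,1},\;u_{2,0},u_{2,1},u_{2,2},\;\ldots,\;u_{i,0},u_{i,1},\ldots,u_{i,i},\;\ldots,
\]
where $u_{i,j}=(3^i+3^j-2)/2$ for $0\le j\le i$. The first key step is to locate the gaps. Between consecutive support elements the gap length is $0$ exactly for the pairs $(0,u_{1,0})$ and $(u_{i,0},u_{i,1})$. Every other consecutive pair gives a genuine zero run:
\begin{itemize}
\item the pair $(u_{i,j-1},u_{i,j})$ with $2\le j\le i$ gives a run of length $3^{j-1}-1$;
\item the cross-scale pair $(u_{i,i},u_{i+1,0})$ gives a run of length $(3^i-1)/2$.
\end{itemize}
Hence the zero runs, in order, form blocks. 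For each $i\ge1$, block $i$ consists of the internal runs $3^1-1,\ldots,3^{i-1}-1$ (there are $i-1$ of them), followed by the cross-scale run $(3^i-1)/2$. Block $i$ therefore contains exactly $i$ runs.

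Next we compute indices by summing block sizes. The runs before block $r$ number $\sum_{i<r} i=r(r-1)/2$. So the cross-scale run ending block $r$, of length $(3^r-1)/2$, is run number $r(r-1)/2+r=r(r+1)/2$. This gives the first identity.

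The internal run of length $3^r-1$ is the last internal run of block $r+1$, with index $r(r+1)/2+r=r(r+3)/2$. This gives the second identity. As a check, for $r=1$ we get $R_2^{(3)}=2$, matching the example from $u_{2,0}=4$ to $u_{2,2}=8$.

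The main obstacle is careful bookkeeping at small $i$. Block $1$ has no internal runs. The zero-length gaps at $j=1$ and at $0\to1$ must be excluded rather than counted. I will verify these against the explicit list $0,1,2,4,5,8,13,14,17,26$ before stating the general count. No record-property argument is needed, since the corollary asserts only the values at these positions.
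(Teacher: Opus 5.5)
Your proposal is correct and is essentially the paper's own argument. Both group the zero runs into blocks, with block $i$ consisting of the $i-1$ positive internal gaps of lengths $3^{j-1}-1$ ($2\le j\le i$) followed by the cross-scale gap $(3^i-1)/2$, and then read off the positions $r(r-1)/2+r=r(r+1)/2$ and $r(r+1)/2+r=r(r+3)/2$; your explicit exclusion of the zero-length gaps $(0,u_{1,0})$ and $(u_{i,0},u_{i,1})$ is the bookkeeping the paper compresses into the phrase ``positive internal gaps.''
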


\begin{proof}
By the proof of Theorem~4.3, among the nonzero indices with exactly
$i$ ternary digits, the positive internal gaps have lengths
$3^{j-1}-1$, $2\le j\le i,$ and the gap from the last such index to the first nonzero index with
$i+1$ ternary digits has length
$\frac{3^i-1}{2}$.
Thus the $i$-th block contributes $i$ zero run, namely, $i-1$ internal ones
followed by one cross-scale run.
Hence the cross-scale run following the $r$-th block is the
$1+2+\cdots+r=\frac{r(r+1)}{2}$-th zero run, and therefore
$R^{(3)}_{\frac{r(r+1)}{2}}=\frac{3^r-1}{2}$.

Also, the first internal gap of length $3^r-1$ occurs in the
$(r+1)$-st block, as its last internal gap. There are
$r(r+1)/2$ zero runs before this block and $r$ positive internal gaps
up to and including this one. Hence its position is
$\frac{r(r+1)}{2}+r=\frac{r(r+3)}{2}$, which gives $R^{(3)}_{\frac{r(r+3)}{2}}=3^r-1$.
\end{proof}

The full enumerator of zero-run lengths can be obtained readily.

\begin{theorem}\label{thm:spec3}
For every $m\geq1$, we have 
$$Z_{3,m}(y)=\sum_{i=1}^{m-1}y^{(3^i-1)/2}+\sum_{j=2}^{m}(m-j+1)y^{3^{j-1}-1}.$$
\end{theorem}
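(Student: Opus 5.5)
The plan is to read $Z_{3,m}(y)$ directly off the explicit description of the support in Theorem~\ref{thm:mod3}, reusing the gap computations from the proof of Theorem~\ref{thm:Run3}. First I will list the nonzero indices in $0\le n<3^m$. By Theorem~\ref{thm:mod3}, these are $n=0$ together with the numbers
$$u_{i,j}=\frac{3^i+3^j-2}{2},\qquad 1\le i\le m,\ 0\le j\le i,$$
where $i$ is the number of ternary digits. I will check the ordering explicitly. For fixed $i$ the $u_{i,j}$ increase strictly in $j$. Also $u_{i,i}=3^i-1<u_{i+1,0}=(3^{i+1}-1)/2$. Hence the full increasing list of nonzero indices is
$$0,\ u_{1,0},u_{1,1},\ u_{2,0},\dots,u_{2,2},\ \dots,\ u_{m,0},\dots,u_{m,m}.$$

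Next I will handle the two ends of the interval. At the left end there is no zero run, since $0$ and $u_{1,0}=1$ are adjacent. At the right end, the last nonzero index is $u_{m,m}=3^m-1$, which is exactly the right endpoint of the interval. So there is no truncated terminal run, and every maximal zero run lies strictly between two consecutive entries of the list. Consequently $Z_{3,m}(y)$ is the sum of $y^{g}$ over all positive gaps $g=b-a-1$ between consecutive listed indices $a<b$.

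The gaps then split into two families, each already computed in the proof of Theorem~\ref{thm:Run3}.
\begin{enumerate}
\item \emph{Internal gaps.} The gap between $u_{i,j-1}$ and $u_{i,j}$ has length $3^{j-1}-1$. This is $0$ for $j=1$, so only $2\le j\le i$ contribute. For a fixed $j$ with $2\le j\le m$, the admissible $i$ are $j\le i\le m$, giving $m-j+1$ runs of length $3^{j-1}-1$.
\item \emph{Cross-scale gaps.} The gap between $u_{i,i}$ and $u_{i+1,0}$ has length $(3^i-1)/2\ge1$. It occurs once for each $1\le i\le m-1$, because the index $u_{m+1,0}$ lies outside the interval.
\end{enumerate}
Summing the two families gives exactly the stated formula. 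As a check, for $m=1$ both sums are empty, consistent with $\TT_0,\TT_1,\TT_2\not\equiv0\pmod 3$.

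The main obstacle is the boundary bookkeeping rather than any computation. I need to make sure that no run is truncated at $3^m-1$ and that the degenerate $j=1$ and $0\to1$ gaps are excluded. I also need to recognise that lengths from the two families may coincide, as with $(3^1-1)/2=1$ and $3^{j-1}-1$; this needs no special treatment, because the formula is written as a sum of monomials rather than with collected coefficients.
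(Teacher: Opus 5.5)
Your proposal is correct and follows the paper's proof essentially verbatim: you list the support via Theorem~\ref{thm:mod3} as the $u_{i,j}$, then count internal gaps of length $3^{j-1}-1$ (multiplicity $m-j+1$) and cross-scale gaps of length $(3^i-1)/2$ (once for each $1\le i\le m-1$). Your explicit check that $3^m-1$ lies in the support, so there is no truncated terminal run, is a useful detail the paper leaves implicit; one harmless slip is your remark that lengths from the two families may coincide, since in fact they never do ($3^i+1=2\cdot 3^{j-1}$ is impossible modulo $3$ for $i\ge1$, $j\ge2$, and $1$ is not of the form $3^{j-1}-1$ with $j\ge2$), though as you note this does not affect the argument.
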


\begin{proof}
For each fixed $i$, the internal gaps between $u_{i,j-1}$ and $u_{i,j}$ have length $3^{j-1}-1$. For a fixed $j\geq2$, this gap occurs once for every $i=j,j+1,\ldots,m$, hence with multiplicity $m-j+1$. This gives
\[
\sum_{j=2}^{m}(m-j+1)y^{3^{j-1}-1}.
\]
The gap from $u_{i,i}$ to $u_{i+1,0}$ has length $(3^i-1)/2$ and occurs once for each $1\leq i\leq m-1$, giving
\[
\sum_{i=1}^{m-1}y^{(3^i-1)/2}.
\]
There are no other gaps between consecutive nonzero terms, so the two contributions give the result.
\end{proof}

For example, the nonzero indices below $27$ are $0,1,2,4,5,8,13,14,17,26$ and hence the zero runs have lengths
$1,2,4,2,8$, whose generating function is $Z_{3,3}(y)=y+2y^2+y^4+y^8$.


\section{The case \texorpdfstring{$p\geq5$}{p at least 5}}\label{sec:pge5}
We will show that, for primes $p\geq5$, rather than a comparably simple description of $n$ with $\nu_p(\TT_n)=0$, Legendre's formula instead turns the valuation into a multiscale family of intervals. For an integer $q\geq3$, let $[x]_q$ denote the least nonnegative residue of $x$ modulo $q$. Define the following union of interval of integers
$$I_q:=\left[\left\lceil\frac{q+2}{3}\right\rceil,\frac{q-1}{2}\right] \cup \left[\left\lceil\frac{2q+2}{3}\right\rceil,q-1\right].$$
For example, we have $I_5=[4]$, $I_{25}=[9,12]\cup[18,24]$, and $I_{125}=[43,62]\cup [84,124]$. Also
$I_7=[3]\cup [6]$, $I_{49}=[17, 24]\cup [34, 48]$ and $I_{343}=[115, 171]\cup [230, 342]$. 
The following is the key technical lemma for the proof.

\begin{lemma}\label{lem:Interval}
Let $q\geq3$ be odd and let $x$ be a positive integer. If $r=[x]_q$, then
$$\left\lfloor\frac{3x-2}{q}\right\rfloor -\left\lfloor\frac{2x-1}{q}\right\rfloor -\left\lfloor\frac{x}{q}\right\rfloor =\ind_{r\in I_q}.$$
\end{lemma}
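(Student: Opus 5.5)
Write $x=kq+r$ with $0\le r\le q-1$. The plan is to remove the multiples of $q$ from each floor, which reduces the identity to a finite check on the residue $r$. Substituting gives
\[
\left\lfloor\frac{3x-2}{q}\right\rfloor=3k+\left\lfloor\frac{3r-2}{q}\right\rfloor,\qquad
\left\lfloor\frac{2x-1}{q}\right\rfloor=2k+\left\lfloor\frac{2r-1}{q}\right\rfloor,\qquad
\left\lfloor\frac{x}{q}\right\rfloor=k .
\]
The $k$'s cancel, so the left-hand side equals
\[
f(r):=\left\lfloor\frac{3r-2}{q}\right\rfloor-\left\lfloor\frac{2r-1}{q}\right\rfloor,
\]
which depends only on $r$, as the statement requires.

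Next we evaluate $f(r)$ on $0\le r\le q-1$ by locating where each floor jumps.

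For $A(r)=\lfloor (3r-2)/q\rfloor$:
\begin{itemize}
\item $A(r)=-1$ only at $r=0$;
\item $A(r)=0$ for $1\le r<(q+2)/3$;
\item $A(r)=1$ for $(q+2)/3\le r<(2q+2)/3$;
\item $A(r)=2$ for $(2q+2)/3\le r\le q-1$, since $3(q-1)-2<3q$.
\end{itemize}

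For $B(r)=\lfloor (2r-1)/q\rfloor$:
\begin{itemize}
\item $B(r)=-1$ at $r=0$;
\item $B(r)=0$ for $1\le r<(q+1)/2$;
\item $B(r)=1$ for $(q+1)/2\le r\le q-1$.
\end{itemize}

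Because $q$ is odd, $(q+1)/2$ is an integer, so the $B$-jump happens exactly after $r=(q-1)/2$. The $A$-jumps happen at $\lceil (q+2)/3\rceil$ and $\lceil (2q+2)/3\rceil$.

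Now order these breakpoints and compute $f=A-B$ on each piece. We need, for $q\ge3$,
\[
\left\lceil\tfrac{q+2}{3}\right\rceil\le \tfrac{q+1}{2}\le \left\lceil\tfrac{2q+2}{3}\right\rceil .
\]
The left inequality holds because $(q+2)/3\le(q+1)/2$, and the right side of that comparison is an integer. The right inequality holds because $(q+1)/2\le(2q+2)/3$. With this ordering:
\begin{itemize}
\item At $r=0$: $f=-1-(-1)=0$.
\item For $1\le r<\lceil (q+2)/3\rceil$: $f=0-0=0$.
\item For $\lceil (q+2)/3\rceil\le r\le (q-1)/2$: $f=1-0=1$.
\item For $(q+1)/2\le r<\lceil(2q+2)/3\rceil$: $f=1-1=0$.
\item For $\lceil(2q+2)/3\rceil\le r\le q-1$: $f=2-1=1$.
\end{itemize}
So $f(r)=1$ exactly when $r\in I_q$, and $f(r)=0$ otherwise. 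This is the claim.

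The main obstacle is the bookkeeping at the endpoints. One must confirm the ordering of the breakpoints above. One must also handle the case where some piece is empty, for example $q=3$ or $q=5$, where the first interval of $I_q$ may be empty or a single point; the conventions for $I_q$ should absorb this automatically. The oddness of $q$ is used only to make $(q\pm1)/2$ integers, so that the $B$-jump lands on the upper endpoint of the first interval of $I_q$.
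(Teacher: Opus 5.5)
Your proof is correct and takes the same route as the paper: cancel the multiples of $q$ to reduce to $\lfloor (3r-2)/q\rfloor-\lfloor (2r-1)/q\rfloor$, then compare the two step functions on the pieces cut out by $\lceil (q+2)/3\rceil$, $(q+1)/2$ and $\lceil (2q+2)/3\rceil$. Your version spells out the endpoint bookkeeping that the paper leaves as ``direct computation'': the $r=0$ case where both floors equal $-1$, and the ordering of the breakpoints.
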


\begin{proof}
By direct computation. Write $x=aq+r$ with $0\leq r<q$. The left-hand side becomes
$$\left\lfloor\frac{3r-2}{q}\right\rfloor -\left\lfloor\frac{2r-1}{q}\right\rfloor.$$
The two step functions change only at the four boundary points appearing in the definition of $I_q$. Comparing their values on the resulting intervals gives $1$ precisely for $r\in I_q$ and $0$ otherwise.
\end{proof}

We can express the full $p$-adic valuation in terms of $I_q$.

\begin{proposition}\label{prop:padic}
Let $p\geq5$ be prime. Then
$$\nuu_p(\TT_n)=\sum_{j\geq1}\ind_{[n+1]_{p^j}\in I_{p^j}}.$$
In particular, $p\mid\TT_n$  if and only if $\quad [n+1]_{p^j}\in I_{p^j}\text{ for at least one }j\geq1$.
\end{proposition}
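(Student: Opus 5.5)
Our plan is to start from the factorial form (\ref{vp}) and apply Legendre's formula (Lemma~\ref{lem:Legendre}) term by term. This gives
\[
\nuu_p(\TT_n)=\sum_{j\geq1}\left(\left\lfloor\frac{3n+1}{p^j}\right\rfloor-\left\lfloor\frac{2n+1}{p^j}\right\rfloor-\left\lfloor\frac{n+1}{p^j}\right\rfloor\right).
\]
The sum is finite, since every term vanishes once $p^j>3n+1$.

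Next we substitute $x=n+1\geq1$. Then $3n+1=3x-2$, $2n+1=2x-1$ and $n+1=x$, so the $j$-th summand becomes exactly
\[
\left\lfloor\frac{3x-2}{q}\right\rfloor-\left\lfloor\frac{2x-1}{q}\right\rfloor-\left\lfloor\frac{x}{q}\right\rfloor,\qquad q=p^j.
\]
Because $p\geq5$ is an odd prime, each $q=p^j$ is odd and at least $3$, so the hypotheses of Lemma~\ref{lem:Interval} hold. Applying that lemma with $r=[x]_q=[n+1]_{p^j}$ turns the $j$-th summand into $\ind_{[n+1]_{p^j}\in I_{p^j}}$, and summing over $j$ gives the displayed identity.

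The ``in particular'' part follows at once. The summands are indicators, hence nonnegative, so the sum is positive if and only if at least one indicator equals $1$. Also, $p\mid\TT_n$ exactly when $\nuu_p(\TT_n)\geq1$.

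We expect no real obstacle here, since the work was done in Lemma~\ref{lem:Interval}. The only point needing care is the translation of the arguments $3n+1,2n+1,n+1$ into $3x-2,2x-1,x$, together with checking that Lemma~\ref{lem:Interval} is applied with odd moduli $q\ge3$. If Lemma~\ref{lem:Interval} were in doubt, we would verify it directly: write $x=aq+r$, so the $a$-parts cancel as $3a-2a-a=0$. The claim then reduces to $\lfloor(3r-2)/q\rfloor-\lfloor(2r-1)/q\rfloor\in\{0,1\}$, equal to $1$ exactly on $I_q$. We would check this by splitting $0\le r<q$ at the thresholds $r\ge (q+2)/3$, $r\ge(q+1)/2$ and $r\ge(2q+2)/3$, using that $q$ is odd so that $(q+1)/2$ is an integer.
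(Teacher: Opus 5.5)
Your proposal is correct and matches the paper's proof: Legendre's formula applied to (\ref{vp}), then the rewriting $3n+1=3(n+1)-2$ and $2n+1=2(n+1)-1$, then Lemma~\ref{lem:Interval} applied termwise with the odd moduli $q=p^j\ge 5$. Your extra remarks are sound additions that the paper leaves implicit: the sum is finite, the indicators are nonnegative (which gives the ``in particular'' part), and in the residue check for $r=0$ both floors equal $-1$ and cancel.
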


\begin{proof}
Apply Lemma~\ref{lem:Interval} term by term to
\begin{align*}
\nuu_p(\TT_n)
&=\sum_{j\geq1}\left(
\left\lfloor\frac{3n+1}{p^j}\right\rfloor
-\left\lfloor\frac{2n+1}{p^j}\right\rfloor
-\left\lfloor\frac{n+1}{p^j}\right\rfloor\right)\\
&=\sum_{j\geq1}\left(
\left\lfloor\frac{3(n+1)-2}{p^j}\right\rfloor
-\left\lfloor\frac{2(n+1)-1}{p^j}\right\rfloor
-\left\lfloor\frac{n+1}{p^j}\right\rfloor\right)
\end{align*}
and the result is proved.
\end{proof}

We next pinpoint the boundary points that survive at every smaller $p$-adic scale. For $q=p^m$,
we write $I_q=[u_q+1,v_q-1]\cup[w_q+1,q-1]$,
with $u_q=\left\lfloor\frac{q+1}{3}\right\rfloor$, $v_q=\frac{q+1}{2}$, and $w_q=\left\lfloor\frac{2q+1}{3}\right\rfloor$.

\begin{lemma}\label{lem:boundary}
Let $p\geq5$ be prime and $q=p^m$. If $x\in\{u_q,v_q,w_q,q\}$, then $[x]_{p^j}\notin I_{p^j}$ for every $j\geq1$. Consequently, in the shifted variable $x=n+1$, the intervals
$[u_q+1,v_q-1]$ and $[w_q+1,q-1]$ correspond to maximal zero runs of $\TT_n\bmod p$.
\end{lemma}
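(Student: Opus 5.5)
The plan is to reduce everything to a residue computation against Proposition~\ref{prop:padic}. By that proposition, $\TT_{x-1}\not\equiv0\pmod p$ exactly when $[x]_{p^j}\notin I_{p^j}$ for all $j\ge1$. So the second assertion follows from the first once the first is proved:
\begin{itemize}
\item every $x\in[u_q+1,v_q-1]\cup[w_q+1,q-1]$ equals its own residue mod $q$ and lies in $I_q$, so $\TT_{x-1}\equiv0$;
\item the flanking points $x=u_q,v_q$ and $x=w_q,q$ give nonzero terms.
\end{itemize}
Hence the two intervals are maximal zero runs in $x=n+1$. The work is therefore the first assertion, which I will prove by splitting into the three ranges $j>m$, $j=m$ and $j<m$.

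\emph{Case $j>m$.} Here $0<x\le q<p^j$, so $[x]_{p^j}=x$. Since $p\ge5$, we have $x\le q\le p^j/5<\lceil (p^j+2)/3\rceil$, which is the left endpoint of $I_{p^j}$. Hence $x\notin I_{p^j}$.

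\emph{Case $j=m$.} For $x=q$ the residue is $0\notin I_q$. For the other three points, $u_q$ lies just below $u_q+1$. Next, $v_q=(q+1)/2$ lies strictly between $v_q-1$ and $w_q+1$, because $w_q\ge v_q$ (this reduces to $(2q+1)/3\ge(q+1)/2$, i.e.\ $q\ge1$, up to the floor). Finally, $w_q$ lies just below $w_q+1$. None of these lies in $I_q$ by the very form $I_q=[u_q+1,v_q-1]\cup[w_q+1,q-1]$.

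\emph{Case $1\le j<m$.} Put $Q=p^j$ and $t=q/Q=p^{m-j}$, which is odd.
\begin{itemize}
\item For $x=q$ the residue is $0$.
\item For $v_q$, write $v_q=\tfrac{t-1}{2}Q+\tfrac{Q+1}{2}$. Thus $[v_q]_Q=v_Q\notin I_Q$, by the previous case applied at scale $Q$.
\item For $u_q$ and $w_q$, use that $q\not\equiv0\pmod 3$ because $p\ge5$. Checking $q\equiv1$ and $q\equiv2\pmod3$ separately gives $3u_q=q\mp1$ and $3w_q=2q\pm1$. Since $Q\mid q$, both $u_q$ and $w_q$ are residues $r\in[0,Q)$ with $3r\equiv\pm1\pmod Q$. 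Such $r$ satisfies $3r\in\{Q-1,Q+1,2Q-1,2Q+1\}$. For $Q\equiv1\pmod 3$ the integral solutions are $(Q-1)/3=u_Q$ and $(2Q+1)/3=w_Q$. For $Q\equiv2\pmod3$ they are $(Q+1)/3=u_Q$ and $(2Q-1)/3=w_Q$. In every case the residue is $u_Q$ or $w_Q$, which are not in $I_Q$ by the case $j=m$ at scale $Q$.
\end{itemize}

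The main obstacle is the bookkeeping in the case $j<m$ for $u_q$ and $w_q$. I will handle it uniformly through the congruence $3r\equiv\pm1\pmod Q$ rather than tracking $t\bmod 3$ explicitly. The one subtle point to verify is that every integral solution of that congruence is exactly $u_Q$ or $w_Q$ as defined with the floors, which the two-case check above settles.
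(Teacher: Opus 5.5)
Your proposal is correct and follows essentially the same route as the paper: you split into $j>m$ and $j\le m$, reduce every boundary residue to $\{0,u_{p^j},v_{p^j},w_{p^j}\}$ (handling $v_q$ via the oddness of $p^{m-j}$), and finish with Proposition~\ref{prop:padic}. The differences are minor. You identify $[u_q]_{p^j}$ and $[w_q]_{p^j}$ through the congruence $3r\equiv\pm1\pmod{p^j}$ instead of splitting on $p^{m-j}\bmod 3$. You also verify $w_Q\ge v_Q$ explicitly to confirm $\{0,u_Q,v_Q,w_Q\}\cap I_Q=\emptyset$, a disjointness the paper asserts without checking.
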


\begin{proof}
First let $1\leq j\leq m$ and write $q=p^jP$. Since $P$ is odd, $P=2a+1$ and $v_q=a p^j+v_{p^j}$, 
so $v_q\equiv v_{p^j}\pmod{p^j}$, while $q\equiv0\pmod{p^j}$.
We consider two cases:
\begin{enumerate} 
\item if $P=3k+1$, then 
$u_q\equiv u_{p^j}\pmod{p^j}$, $w_q\equiv w_{p^j}\pmod{p^j}$.
\item if $P=3k+2$, then $u_q\equiv w_{p^j}\pmod{p^j}$, and $w_q\equiv u_{p^j}\pmod{p^j}$. 
\end{enumerate}
Thus every boundary residue belongs to
$\{0,u_{p^j},v_{p^j},w_{p^j}\}$, which is disjoint from $I_{p^j}=[u_{p^j}+1,v_{p^j}-1]\cup[w_{p^j}+1,p^j-1]$.

Also, If $j>m$, then each of $u_q,v_q,w_q,q$ is less than $p^j/3$ because $p\geq5$. Hence these residues lie below the first point of $I_{p^j}$.

Proposition~\ref{prop:padic} now shows that the four boundary points correspond to nonzero terms modulo $p$, whereas every point in either displayed open boundary interval lies in $I_q$ at scale $q$ and therefore corresponds to a zero term. Hence the intervals are indeed maximal zero runs and we are done.
\end{proof}

Define the run length
$$A(q):=v_q-u_q-1, \qquad B(q):=q-w_q-1.$$
These are the two new run lengths created at scale $q$ (except that $A(5)=0$ which corresponds to an empty interval).

The following proposition says that these lengths are increasing and the interval do not interferes with each other as the scales increase, and hence these lengths are the record values.

\begin{proposition}\label{prop:scale}
Let $q=p^m$ with $p\geq5$ prime. The positive zero-run lengths that first appear at scale $q$ are $B(q)$ and $A(q)$ (if $A(q)>0$). Moreover, $B(q)\geq A(q)$ and $$A(q)>B(p^{m-1})$$ for $m\ge 2$. 
Consequently, each positive $A(p^m)$ and each $B(p^m)$ is a left-to-right record value, and there are no other record values.
\end{proposition}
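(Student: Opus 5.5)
We would organize the proof around Proposition~\ref{prop:padic} in the shifted variable $x=n+1$. A zero of $\TT_n\bmod p$ corresponds to $x$ with $[x]_{p^j}\in I_{p^j}$ for some $j$. Let $Z_j=\{x:[x]_{p^j}\in I_{p^j}\}$. The zero set is then $\bigcup_j Z_j$, and $Z_j$ is $p^j$-periodic.

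\textbf{Step 1: explicit run lengths.} We first compute $A(q)$ and $B(q)$ by splitting on $q\equiv 1$ or $2\pmod 3$ (which depends on the parity of $m$ when $p\equiv 2\pmod 3$).
\begin{itemize}
\item If $q\equiv1\pmod3$, then $u_q=(q-1)/3$ and $w_q=(2q-2)/3$, so $A(q)=(q-1)/6$ and $B(q)=(q-4)/3$.
\item If $q\equiv2\pmod3$, then $u_q=(q+1)/3$ and $w_q=(2q-1)/3$, so $A(q)=(q-5)/6$ and $B(q)=(q-2)/3$.
\end{itemize}
This matches the $p=7$ data: $8=(49-1)/6$ and $15=(49-4)/3$. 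In both cases $B(q)\ge A(q)$ is immediate, with $A(5)=0$. The inequality $A(p^m)>B(p^{m-1})$ reduces to roughly $p^m/6>p^{m-1}/3$, i.e.\ $p>2$. The $\pm$ constants are absorbed because $p\ge5$: the worst case is $(p^m-5)/6>(p^{m-1}-2)/3$, which is equivalent to $p^{m-1}(p-2)>1$.

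\textbf{Step 2: structural claim.} We then prove by induction on $m$ that the zero runs of the finite word on $1\le x\le p^m$ are as follows. Away from the two scale-$p^m$ intervals $[u_q+1,v_q-1]$ and $[w_q+1,q-1]$, every run has length at most $B(p^{m-1})$, and those two intervals are themselves maximal runs by Lemma~\ref{lem:boundary}.

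For the induction, take $x\le p^m$ outside these two intervals. Then $[x]_{p^m}\notin I_{p^m}$. For $j>m$ we have $x<p^j/3$, hence $[x]_{p^j}=x$ is below $I_{p^j}$. So whether $x$ is a zero is decided by $Z_1,\dots,Z_{m-1}$ alone, which is a $p^{m-1}$-periodic condition. We must check that a run of this periodic pattern cannot wrap across a multiple of $p^{m-1}$ into a longer run. For this we use the points $x\equiv0\pmod{p^{m-1}}$: by Lemma~\ref{lem:boundary} applied at scale $p^{m-1}$ with $x=q$, these are nonzero at all smaller scales. They are also nonzero at scale $p^m$ unless they fall inside the scale-$p^m$ intervals, in which case they are not in the region under consideration. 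The boundary points $u_q,v_q,w_q,q$ separate the new intervals from the rest.

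It follows that every run outside the new intervals lies within one period block of length $p^{m-1}$ and coincides with a run already present at scale $p^{m-1}$, or is a truncation of one. By induction its length is at most $B(p^{m-1})$. The "first appear" statement follows from the same decomposition: the only run lengths not already present at scale $p^{m-1}$ are $A(q)$ and $B(q)$.

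\textbf{Step 3: records.} In increasing $x$, the first time a run of length $A(p^m)$ occurs is at scale $p^m$, and it precedes $B(p^m)$. At that moment, all earlier runs have length at most $B(p^{m-1})<A(p^m)$, so it is a record. Then $B(p^m)\ge A(p^m)$ is a record as well (ties allowed), and $B(p^m)<A(p^{m+1})$, consistent with Step 2 at the next scale. Any other run has length at most the current $B$ and is preceded by a run of that length, so it is either not a record or a tie equal to an existing record value. For $p=5$, $A(5)=0$ is dropped.

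\textbf{Main obstacle.} We expect the non-merging claim in Step 2 to be the hardest part: showing that zeros coming from different scales never concatenate across block boundaries. The plan is to rely on the multiples of $p^{m-1}$ and on $u,v,w$ at each scale acting as separators, and to handle wraparound at $x\equiv0$ by the $j>m$ estimate $x<p^j/3$.
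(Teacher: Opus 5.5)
Your proposal is correct and follows essentially the same route as the paper. Both arguments use the $p^{m-1}$-periodic block decomposition, with the multiples of $p^{m-1}$ and the points $u_q,v_q,w_q,q$ (via Lemma~\ref{lem:boundary}) as separators, and then compare $A(p^m)$ with $B(p^{m-1})$ explicitly; your version spells out the induction and why the scales $j>m$ do not matter more carefully than the paper does.

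Two small fixes are needed. First, for $q\equiv1\pmod3$ one has $w_q=(2q+1)/3$, not $(2q-2)/3$; your value $B(q)=(q-4)/3$ is nevertheless correct. Second, drop the hedge ``or is a truncation of one'': $u_q,v_q,w_q$ are nonzero at every lower scale by Lemma~\ref{lem:boundary}, so no lower-scale run is ever cut, and this is exactly what the ``first appear'' claim requires.
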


\begin{proof}
From Lemma~\ref{lem:boundary} we know that the new forbidden intervals at scale $q$ are not cut by lower-scale forbidden intervals, so their lengths are exactly $A(q)$ and $B(q)$. All other runs present at scale $q$ are copies of runs already present at lower scales. 
Indeed, if we consider the scale $p^{m-1}$, the conditions coming from
the scales $p,\ldots,p^{m-1}$ depend only on the residue modulo $p^{m-1}$, so
every complete block of length $p^{m-1}$ carries a translated copy of the
zero-run pattern at scale $p^{m-1}$. These copies cannot merge across adjacent
blocks, since every block boundary $x=ap^{m-1}$ satisfies
\[
[x]_{p^j}=0\notin I_{p^j},
\qquad 1\le j\le m-1,
\]
and hence corresponds to a nonzero term in the safe regions. In
addition, Lemma~5.3 shows that $u_q,v_q,w_q,q$ are nonzero at every
lower scale. Thus the two new forbidden intervals are separated from
all lower-scale runs, and no additional run length is created at scale
$q$.

Since $q\equiv1$ or $5\pmod6$,
$$B(q)-A(q)=
\begin{cases}
(q-7)/6,&q\equiv1\pmod6,\\
(q+1)/6,&q\equiv5\pmod6,
\end{cases}
$$
which is nonnegative; equality occurs only at $q=7$. For $m\geq2$, it is clear that
$B(p^{m-1})<\frac{p^{m-1}}3=\frac{q}{3p}$, and $A(q)\geq\frac{q-5}{6}$,
and therefore
$$A(q)-B(p^{m-1}) >\frac{q-5}{6}-\frac{q}{3p} =\frac{(p-2)q-5p}{6p}>0,$$
since $q\geq p^2$ and $p\geq5$. Also, the record statement follows by induction on the scale.
\end{proof}

Finally we are able to state our main theorem: the record values in closed form. Note that the two congruence classes of primes modulo $3$ behave differently.

\begin{theorem}\label{thm:Runp1}
Let $p\geq5$ be a prime. 
\begin{enumerate}
\item If $p\equiv1\pmod3$, then the left-to-right record values of $\{\RR_i^{(p)}\}$ are precisely the positive numbers of the forms $\frac{p^r-1}{6}$ and $\frac{p^r-4}{3}$ for $r\geq 1$.
\item If $p\equiv 2\pmod3$, then the left-to-right record values of $\{\RR_i^{(p)}\}$ are precisely the following:
\begin{enumerate} 
\item For odd $r\geq1$, the record values at scale $p^r$ are
$\frac{p^r-5}{6}$ and $\frac{p^r-2}{3}$ (except for $(p,r)=(5,1)$ of $\frac{p^r-5}{6}$, which is $0$).
\item For even $r\geq2$ , the record values at scale  $p^r$ are
$\frac{p^r-1}{6}$ and $\frac{p^r-4}{3}.$
\end{enumerate}
\end{enumerate}
\end{theorem}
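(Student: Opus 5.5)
By Proposition~\ref{prop:scale}, the record values are exactly the positive numbers $A(p^r)$ and $B(p^r)$ for $r\geq1$. So the theorem reduces to evaluating
\[
A(q)=v_q-u_q-1=\frac{q+1}{2}-\left\lfloor\frac{q+1}{3}\right\rfloor-1,\qquad B(q)=q-\left\lfloor\frac{2q+1}{3}\right\rfloor-1
\]
in closed form for $q=p^r$. The floors depend only on $q\bmod 3$. Since $p\geq5$, we have $3\nmid q$, so $q\equiv1$ or $2\pmod3$. Combined with $q$ being odd, this means $q\equiv1$ or $5\pmod 6$.

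The first step is to compute both cases directly.
\begin{itemize}
\item If $q\equiv1\pmod 6$, then $\lfloor (q+1)/3\rfloor=(q-1)/3$ and $\lfloor(2q+1)/3\rfloor=(2q+1)/3$. This gives $A(q)=\frac{q+1}{2}-\frac{q-1}{3}-1=\frac{q-1}{6}$ and $B(q)=q-\frac{2q+1}{3}-1=\frac{q-4}{3}$.
\item If $q\equiv5\pmod6$, then $\lfloor(q+1)/3\rfloor=(q+1)/3$ and $\lfloor(2q+1)/3\rfloor=(2q-1)/3$. This gives $A(q)=\frac{q-5}{6}$ and $B(q)=\frac{q-2}{3}$.
\end{itemize}
As a consistency check, the differences $B(q)-A(q)$ agree with those recorded in the proof of Proposition~\ref{prop:scale}.

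The second step is to determine $p^r\bmod 3$. If $p\equiv1\pmod3$, then $p^r\equiv1$ for all $r$, so case (1) follows from the first bullet. If $p\equiv2\pmod3$, then $p^r\equiv2$ for odd $r$ and $p^r\equiv1$ for even $r$, which gives parts (a) and (b).

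The only exception is where $A$ vanishes. This happens only when $q-5=0$ in the second case, or $q-1=0$ in the first, which is impossible. So the single zero value is $(p,r)=(5,1)$, matching the stated exception. The qualifier ``positive numbers'' in (1) is vacuous, since $p\geq7$ gives $A(p^r)\geq1$ there; it is harmless.

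There is no real obstacle. The substantive content, that these are exactly the records and that no lower-scale run exceeds them, is already in Proposition~\ref{prop:scale}. The only care needed is getting the floor evaluations right in each residue class and checking that the boundary case $p=7$, $r=1$ fits: there $A(7)=B(7)=1$, and ties count as records.
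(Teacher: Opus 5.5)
Your proposal is correct and takes essentially the same route as the paper: it invokes Proposition~\ref{prop:scale} to identify the record values with the positive $A(p^r)$ and $B(p^r)$, then evaluates these according to $p^r \bmod 6$. Your explicit floor computations in the two residue classes, which the paper leaves as ``follow from the definitions,'' are correct, as is your treatment of the exceptional case $(p,r)=(5,1)$.
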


\begin{proof}
First the case $p\equiv1\pmod3$. Such a prime satisfies $p\equiv1\pmod6$, and therefore $p^r\equiv1\pmod6$ for every $r\geq1$. Hence $A(p^r)=\frac{p^r-1}{6}$ and $B(p^r)=\frac{p^r-4}{3}$ by applying Proposition~\ref{prop:scale} directly.

Next the case $p\equiv2\pmod3$. Now $p\equiv5\pmod6$, so 
$$
p^r\equiv
\begin{cases}
5\pmod6,&r\text{ odd},\\
1\pmod6,&r\text{ even},
\end{cases}
$$
and again the formulas follow from the definitions of $A(p^r)$ and $B(p^r)$ and Proposition~\ref{prop:scale}.
\end{proof}

For example, set $p=7 (\equiv 1 \bmod 3)$, one can check that the distinct record values begin
$$1,8,15,57,113,400,799,\ldots = B(7), A(7^2), B(7^2), A(7^3),B(7^3),  A(7^4),B(7^4), \dots$$
Or let $p=5 (\equiv 2 \bmod 3)$, then the distinct record values begin
$$1,4,7,20,41,104,207,520,\ldots = B(5), A(5^2), B(5^2), A(5^3),B(5^3),  A(5^4),B(5^4), \dots$$
Note that $A(5)=0$ corresponding to the empty set, and $A(7)=B(7)=1$.

\medskip
We next determine the generating function for the zero-run lengths. For $j\geq1$, define
$$
A_j:=
\begin{cases}
(p^j-1)/6,&p^j\equiv1\pmod6,\\
(p^j-5)/6,&p^j\equiv5\pmod6,
\end{cases}
\qquad 
and
\qquad 
B_j:=
\begin{cases}
(p^j-4)/3,&p^j\equiv1\pmod6,\\
(p^j-2)/3,&p^j\equiv5\pmod6.
\end{cases}
$$
Let
$$
E_j(y):=\ind_{A_j>0}y^{A_j}+y^{B_j}.
$$

\begin{theorem}\label{thm:specp}
For every prime $p\geq5$ and every $m\geq1$,
$$Z_{p,m}(y)=E_m(y)+\frac{p-1}{2}\sum_{j=1}^{m-1}\left(\frac{p+1}{2}\right)^{m-j-1}E_j(y).$$
\end{theorem}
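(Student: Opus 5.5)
The plan is to prove, by induction on $m$, the one-step recursion
$$Z_{p,m}(y)-E_m(y)=\frac{p+1}{2}\bigl(Z_{p,m-1}(y)-E_{m-1}(y)\bigr)+\frac{p-1}{2}E_{m-1}(y)\qquad(m\ge 2),$$
with base case $Z_{p,1}(y)=E_1(y)$. Unrolling this recursion gives exactly the stated formula. For the base case, Proposition~\ref{prop:padic} and Lemma~\ref{lem:boundary} at $m=1$ show that, in the shifted variable $x=n+1\in[1,p]$, the zero runs are exactly the two intervals of $I_p$, of lengths $A_1$ (when positive) and $B_1$. Both are separated by the nonzero points $u_p,v_p,w_p,p$, so $Z_{p,1}=E_1$.

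For the inductive step, I will work in $x=n+1\in[1,q]$ with $q=p^m$, and set $q'=p^{m-1}$. By Proposition~\ref{prop:padic}, the zero set in this window is $I_q$ together with the set of $x$ whose residue mod $q'$ is a lower-scale zero. By Proposition~\ref{prop:scale} and Lemma~\ref{lem:boundary}, the two intervals of $I_q$ are exactly the two new runs, contributing $E_m(y)$. Every other run lies inside the safe region
$$S=[1,u_q]\cup[v_q,w_q]\cup\{q\}.$$
Within $S$, the runs are translated copies of runs of the scale-$q'$ pattern, because $S$ is cut only at points that are nonzero at every lower scale. Decompose one period $[1,q']$ of the scale-$q'$ pattern into pieces:
$$P_1=[1,u_{q'}],\quad \text{run }A,\quad P_2=[v_{q'},w_{q'}],\quad \text{run }B,\quad \{q'\}.$$
Let $G_1(y),G_2(y)$ be the enumerators of runs lying inside $P_1$ and inside $P_2$. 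Then $Z_{p,m-1}-E_{m-1}=G_1+G_2$ (the point $q'$ is nonzero).

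Next, I will describe $S$ as a concatenation of such pieces, using the residues of $u_q,v_q,w_q$ mod $q'$ computed in the proof of Lemma~\ref{lem:boundary}. Write $p=2a+1$. Then $v_q=aq'+v_{q'}$. In the case $p\equiv1\pmod 3$, $u_q=\frac{p-1}{3}q'+u_{q'}$ and $w_q=\frac{2(p-1)}{3}q'+w_{q'}$. In the case $p\equiv2\pmod3$, $u_q\equiv w_{q'}$ and $w_q\equiv u_{q'}$, with the appropriate multiples of $q'$. Hence the first segment $[1,u_q]$ consists of some number $k_1$ of full periods, followed by an initial part of a period ending at $u_{q'}$ or $w_{q'}$. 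The second segment $[v_q,w_q]$ starts at the residue $v_{q'}$ and ends similarly, after some number $k_2$ of full periods. Each full period contributes $G_1+G_2+E_{m-1}$. Each partial piece contributes an explicit combination: for example, $[1,u_{q'}]$ contributes $G_1$, while $[1,w_{q'}]$ contributes $G_1+G_2+y^{A_{m-1}}$. Finally, the isolated nonzero point $q$ contributes nothing.

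Adding up, the coefficient of $G_1$ and of $G_2$ should come out to $(p+1)/2$ in each case, while the counts of $y^{A_{m-1}}$ and of $y^{B_{m-1}}$ should each come out to $(p-1)/2$. I will verify this separately for $p\equiv1$ and $p\equiv2\pmod3$. The residue of $q'$ mod $6$ also enters here, since it determines the sign pattern in the $\lfloor\cdot\rfloor$ formulas for $u_q,w_q$.

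I expect this bookkeeping to be the main obstacle. The task is to show that the fragmentary pieces at the four cut points $u_q,v_q,w_q,q$ combine to exactly $(p+1)/2$ copies of the inner pattern and $(p-1)/2$ copies of each lower-scale new run, uniformly in $p$ and $m$, including the degenerate case $A_1=0$ for $p=5$, which the indicator in $E_j$ handles. I would first sanity-check the recursion on $p=7,m=3$ against $Z_{7,3}=24y+3y^8+3y^{15}+y^{57}+y^{113}$ before writing the general case analysis.
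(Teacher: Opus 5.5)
Your proposal is correct and is essentially the paper's proof. The paper likewise writes $Z_{p,m}=P_m+Q_m+E_m$, where $P_m,Q_m$ (your $G_1,G_2$ one scale up) are the spectra of the safe regions $[1,u_q]$ and $[v_q,w_q]$; it tiles these by blocks of length $p^{m-1}$ using $u_q=\frac{p-1}{3}p^{m-1}+u_{p^{m-1}}$ etc.\ (with the swapped offsets when $p\equiv 2\pmod 3$), and arrives at $Z_{p,m}=\frac{p+1}{2}Z_{p,m-1}+E_m-E_{m-1}$, which is exactly your recursion. One simplification: the residue of $p^{m-1}$ modulo $6$ plays no role, since these offset identities hold for every odd $p^{m-1}$ (the integer multiple of $p^{m-1}$ passes through the floor), so the case split on $p\bmod 6$ is all you need.
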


\begin{proof}

Let $q=p^m$ and $s=p^{m-1}$. In the shifted coordinate $x=n+1$, define $P_m(y)$ to be the spectrum contributed by the safe region $1\leq x\leq u_q$, and $Q_m(y)$ to be the spectrum contributed by $v_q\leq x\leq w_q$. Lemma~\ref{lem:boundary} says that these cuts do not split lower-scale zero runs. The two new forbidden intervals contribute exactly $E_m(y)$, and hence
\[
Z_{p,m}(y)=P_m(y)+Q_m(y)+E_m(y).
\]

We compute $P_m+Q_m$ by dividing the safe regions into blocks of length $s$.

\begin{enumerate}
\item 
If $p=6k+1$, then
$u_q=2ks+u_s, v_q=3ks+v_s$, and $w_q=4ks+w_s$. Thus
\[
P_m(y)=2kZ_{p,m-1}(y)+P_{m-1}(y).
\]
The middle safe region consists of a terminal partial block, $k-1$ complete blocks, and an initial partial block. Consequently,
\begin{align*}
Q_m(y) &=(k-1)Z_{p,m-1}(y)+P_{m-1}(y) +2Q_{m-1}(y)+E_{m-1}(y).
\end{align*}
Adding these identities and using
$P_{m-1}(y)+Q_{m-1}(y)=Z_{p,m-1}(y)-E_{m-1}(y)$
gives
$$P_m(y)+Q_m(y)=\frac{p+1}{2}Z_{p,m-1}(y)-E_{m-1}(y).$$
\item  If $p=6k-1$, then
$u_q=(2k-1)s+w_s$, $v_q=(3k-1)s+v_s$, and $w_q=(4k-1)s+u_s$. The same block decomposition gives
\begin{align*}
P_m(y)&=(2k-1)Z_{p,m-1}(y)+P_{m-1}(y)+Q_{m-1}(y)
+\ind_{A_{m-1}>0}y^{A_{m-1}},\\
Q_m(y)&=(k-1)Z_{p,m-1}(y)+P_{m-1}(y)+Q_{m-1}(y)+y^{B_{m-1}},
\end{align*}
so again
$P_m(y)+Q_m(y)=\frac{p+1}{2}Z_{p,m-1}(y)-E_{m-1}(y).$
\end{enumerate}
Anyway, in both cases we obtain the recurrence
$$Z_{p,m}(y) =\frac{p+1}{2}Z_{p,m-1}(y)+E_m(y)-E_{m-1}(y).$$
Since $Z_{p,1}(y)=E_1(y)$, iterating the recurrence yields
$$Z_{p,m}(y) =E_m(y)+\frac{p-1}{2} \sum_{j=1}^{m-1}\left(\frac{p+1}{2}\right)^{m-j-1}E_j(y),$$
and the proof is completed.
\end{proof}

For example, for $p=7$ and $m=3$, we have $E_1(y)=2y$, $E_2(y)=y^8+y^{15}$, and $E_3(y)=y^{57}+y^{113}$,
hence $$Z_{7,3}(y)=24y+3y^8+3y^{15}+y^{57}+y^{113}.$$

For $p=5$ and $m=3$, we have $E_1(y)=y$, $E_2(y)=y^4+y^7$, $E_3(y)=y^{20}+y^{41}$, and hence
$$Z_{5,3}(y)=6y+2y^4+2y^7+y^{20}+y^{41}.$$

\section{The number of nonzero entries}\label{sec:support}
Since from $Z_{p,m}$ we know the distribution of zero runs, we can also compute the number of terms not divisible by $p$ in the interval $0\leq n<p^m$. For a prime $p$ and $m\geq1$, let
$$N_{p,m}:=\#\{0\leq n<p^m:p\nmid\TT_n\}.$$

\begin{theorem}\label{thm:supportcount}
For every $m\geq1$, $N_{2,m}=\FF_{m+1}$, $N_{3,m}=\binom{m+2}{2}$, and for every prime $p\geq5$,
$$N_{p,m}=\frac{p+3}{2}\left(\frac{p+1}{2}\right)^{m-1}.$$

\end{theorem}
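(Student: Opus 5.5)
The plan is to treat the three cases separately, using the explicit descriptions of the support already obtained. For $p=2$ and $p=3$ no new work is needed beyond counting. For $p=2$, Theorem~\ref{thm:mod2} identifies the nonzero indices in $[0,2^m)$ with the set $S_m$ of binary words of length $m$ that end in $0$ and contain no two consecutive $1$'s. In the proof of Theorem~\ref{thm:Jacobsthal} it was already observed that $|S_m|=\FF_{m+1}$, via the decomposition $S_m=S_{m-1}\cup(2^{m-1}+S_{m-2})$ together with $|S_1|=1$ and $|S_2|=2$. For $p=3$, Theorem~\ref{thm:mod3} identifies the nonzero indices in $[0,3^m)$ with the weakly increasing ternary words of length $m$ (leading zeros allowed). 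Such a word is determined by the multiplicities of the digits $0,1,2$, which are nonnegative and sum to $m$. This gives $\binom{m+2}{2}$.

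For $p\ge5$ I would avoid the spectrum and count directly from the recursive block structure in the proof of Theorem~\ref{thm:specp}. Work in the shifted coordinate $x=n+1\in[1,q]$ with $q=p^m$ and $s=p^{m-1}$. Let $N_m$ be the number of safe $x$ in $[1,q]$, and split $N_m=P'_m+Q'_m$, where $P'_m$ counts safe points in $[1,u_q]$ and $Q'_m$ counts safe points in $[v_q,w_q]$. By Proposition~\ref{prop:padic}, nothing in the two forbidden intervals of $I_q$ is safe. The endpoint $x=q$ is safe by Lemma~\ref{lem:boundary}, and it lies in $[w_q+1,q]$, so it must be added separately.

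This endpoint needs care. Over $1\le x\le q$, the safe count differs from the count over $0\le n<q$ by swapping $n=-1$ (i.e.\ $x=0$) for $x=q$. Both are safe points, so the counts agree. Hence $N_{p,m}$ equals the number of safe $x$ in $[1,q]$, and
\[
N_m=P'_m+Q'_m+1.
\]

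Next I would replicate the block decomposition. In the case $p=6k+1$, use $u_q=2ks+u_s$, $v_q=3ks+v_s$ and $w_q=4ks+w_s$. At scales below $q$, the conditions depend only on $x \bmod s$, so each complete $s$-block contributes $N_{m-1}$. From this I expect:
\begin{itemize}
\item $P'_m=2kN_{m-1}+P'_{m-1}$;
\item $Q'_m=(k-1)N_{m-1}+(\text{tail of a block from }v_s)+P'_{m-1}$, where the tail from $v_s$ to $s$ has $Q'_{m-1}+1$ safe points.
\end{itemize}
Summing these gives $N_m=\frac{p+1}{2}N_{m-1}$. The case $p=6k-1$ is analogous, with $u_s$ and $w_s$ exchanged.

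The main obstacle is the bookkeeping of partial blocks and their endpoints, specifically avoiding double counting of the shared safe boundary points $v_s$, $w_s$ and $s$. As a safeguard, I would instead try a cleaner argument. The forbidden set at scale $q$ has size $A(q)+B(q)$, and the lower-scale pattern is $s$-periodic. So
\[
N_m=p\,N_{m-1}-\#\{\text{lower-scale safe points lying in the two new intervals}\}.
\]
The new intervals have total length about $q/2$. Their endpoints are congruent to boundary residues modulo $s$, so they cover a whole number of $s$-blocks plus partial blocks, and the partial blocks together reassemble into exactly complete blocks. This should show that the removed count is $\frac{p-1}{2}N_{m-1}$, which again gives $N_m=\frac{p+1}{2}N_{m-1}$.

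It then remains to check the base case $N_1=\frac{p+3}{2}$. At scale $p$, the forbidden set $I_p$ has $A(p)+B(p)$ elements, and $A(p)+B(p)=\frac{p-3}{2}$ in both residue classes modulo $6$. Hence $N_1=p-\frac{p-3}{2}=\frac{p+3}{2}$. Induction then yields $N_{p,m}=\frac{p+3}{2}\left(\frac{p+1}{2}\right)^{m-1}$. As a cross-check, $N_{7,3}=5\cdot 16=80$, which matches $343$ minus the total zero count $24+24+45+57+113=263$ read off from $Z_{7,3}$.
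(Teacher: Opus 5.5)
\textbf{Route.} Your counts for $p=2$ and $p=3$ are the paper's. For $p\ge5$ you take a different route. The paper does not recount blocks. It differentiates the recurrence $Z_{p,m}=\frac{p+1}{2}Z_{p,m-1}+E_m-E_{m-1}$ from the proof of Theorem~\ref{thm:specp} at $y=1$, using $E_m'(1)=A_m+B_m=\frac{p^m-3}{2}$. This gives a recurrence for the number of zeros, which is then subtracted from $p^m$. A direct count of safe points, independent of the spectrum, is a legitimate alternative. What it buys is independence from the spectrum computation; the paper's route buys brevity once Theorem~\ref{thm:specp} is available.

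\textbf{Gap in the main recurrence.} The recurrence you actually wrote down is wrong. For $p=6k+1$ the final partial block of $[v_q,w_q]$ is $[4ks+1,4ks+w_s]$. It mirrors $[1,w_s]$, not $[1,u_s]$, so it contains $P'_{m-1}+Q'_{m-1}$ safe points, not $P'_{m-1}$. With your two bullets one gets
\[
N_m=3kN_{m-1}+P'_{m-1}+1=\tfrac{p+1}{2}N_{m-1}-Q'_{m-1},
\]
so ``summing these gives $\frac{p+1}{2}N_{m-1}$'' is false. For example, $p=7$, $m=2$ gives $18$, whereas $N_{7,2}=20$. With the missing term restored,
\[
N_m=(3k-1)N_{m-1}+2(P'_{m-1}+Q'_{m-1}+1)=(3k+1)N_{m-1}.
\]
The case $p=6k-1$ is also not just a swap of $u_s$ and $w_s$ in your formulas. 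There $P'_m=(2k-1)N_{m-1}+P'_{m-1}+Q'_{m-1}$ and $Q'_m=(k-1)N_{m-1}+(Q'_{m-1}+1)+P'_{m-1}$, which together with the point $x=q$ give $3kN_{m-1}$.

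\textbf{The safeguard argument.} Your subtraction argument is correct, but the claim that the partial blocks reassemble into exactly $\frac{p-1}{2}$ complete blocks is its entire content, and it must be checked. Count the points passing scales $1,\dots,m-1$ in the two new intervals.
\begin{itemize}
\item For $p=6k+1$: $[2ks+u_s+1,\,3ks+v_s-1]$ contains $(Q'_{m-1}+1)+(k-1)N_{m-1}+P'_{m-1}=kN_{m-1}$ of them. $[4ks+w_s+1,\,(6k+1)s-1]$ contains $1+(2k-1)N_{m-1}+(N_{m-1}-1)=2kN_{m-1}$.
\item For $p=6k-1$: $[(2k-1)s+w_s+1,\,(3k-1)s+v_s-1]$ contains $1+(k-1)N_{m-1}+P'_{m-1}$. $[(4k-1)s+u_s+1,\,(6k-1)s-1]$ contains $(Q'_{m-1}+1)+(2k-2)N_{m-1}+(N_{m-1}-1)$. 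The total is $(3k-1)N_{m-1}$.
\end{itemize}
The reason is that each tail can be paired with a head so that their union is $[1,s]$ and their overlap lies in $I_s$. The pairing is within the same interval for $p=6k+1$ and across the two intervals for $p=6k-1$. In both cases the removed count is $\frac{p-1}{2}N_{m-1}$, as you predicted.

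\textbf{Minor point.} Your endpoint paragraph is muddled: $0\le n<q$ corresponds exactly to $1\le x\le q$, so no swap of $x=0$ for $x=q$ is involved.
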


\begin{proof}
For $p=2$, Theorem~\ref{thm:mod2} identifies the nonzero entries with binary strings of length $m$ that end in $0$ and contain no consecutive $1$'s and hence $N_{2,m}=\FF_{m+1}$.

For $p=3$, Theorem~\ref{thm:mod3} identifies the nonzero entries with weakly increasing words of length $m$ over $\{0,1,2\}$, Hence
$N_{3,m}=\binom{m+2}{2}$.

Now let $p\geq5$ and write $z_{p,m}:=Z'_{p,m}(1)$ which counts the zeros in the interval and so $N_{p,m}=p^m-z_{p,m}$.
For both possible residues $p^m\equiv1,5\pmod6$,
\[
E_m'(1)=A_m+B_m=\frac{p^m-3}{2}.
\]
Differentiating the recurrence from the proof of Theorem~\ref{thm:specp} gives
\[
z_{p,m}=\frac{p+1}{2}z_{p,m-1}+\frac{p-1}{2}p^{m-1}.
\]
Subtracting this from $p^m$ yields
\[
N_{p,m}=\frac{p+1}{2}N_{p,m-1}.
\]
At $m=1$, we have
$N_{p,1}=p-E_1'(1)=\frac{p+3}{2}$
and the stated formula follows.
\end{proof}

In other words, we have Fibonacci growth for $p=2$, polynomial growth for $p=3$, and geometric growth with ratio $(p+1)/2$ for $p\geq5$. In all three cases, the density of nonzero entries tends to zero as $\lim_{m\to\infty}\frac{N_{p,m}}{p^m}=0$ in all three cases. 

\section{Concluding remarks}\label{sec:conclusion}

In this paper we fully determine the record values of the zero-run lengths of the $(3,2)$-Raney numbers modulo any given prime $p$. For every $m$, we also determine exactly the generating function for the zero-run lengths among the first $p^m$ terms, as well as the number of terms not divisible by $p$.

A central feature of the results is the distinction among the three cases $p=2$, $p=3$, and $p\ge5$.
For $p=2$, the support is described by even Fibbinary integers, and the record structure involves Fibonacci and Jacobsthal numbers. For $p=3$, the support is described by weakly increasing ternary words. For $p\ge5$, neither of these digit-word descriptions persists. Instead,
the support is controlled by a hierarchy of forbidden residue intervals modulo $p,p^2,p^3,\ldots$.
The same trichotomy is reflected in the number of nonzero terms: Theorem~6.1 gives Fibonacci growth for $p=2$, polynomial growth for $p=3$, and geometric growth with ratio $(p+1)/2$ for $p\ge5$. Nevertheless, in every case the density of nonzero terms tends to zero.

It would be interesting to understand whether the even-Fibbinary criterion, the weakly increasing ternary criterion, and the multiscale residue intervals have direct interpretations on ordered pairs of ternary trees or on the related marked fighting-fish models.

The present paper only considers the Raney sequence $R_{3,2}(n)$ whereas the $R_{2,1}(n)$ counterpart was treated in~\cite{AK_73}.  The natural next step is to study the zero-run spectra of the general Raney numbers. Based on computer experiment, we have a promising conjecture:

\begin{conjecture}\label{conj:raney}
Fix $k,r$ and a prime $p\nmid kr$. The left-to-right record values of the zero-run sequence of $R_{k,r}(n)\bmod p$ belong to a finite union of families of the form
\[
\frac{a p^m+b}{c},
\]
where $a,b,c$ depend only on $k,r$, and $p$. 
\end{conjecture}

Finally, it seems quite interesting and may be challenging to see whether in any of the combinatorial structures mentioned in the introduction there are combiatorial interpretations explaining the number theoretical results presented in this paper. 

\begin{problem}\label{prob:combexp}
Give combinatorial interpretations of the number-theoretic results obtained in this paper.
\end{problem}

We leave these directions to the interested readers.\\


\section*{Declaration of generative AI and AI-assisted technologies in the manuscript preparation process}
During the preparation of this work, the authors used ChatGPT (OpenAI) for language editing, organization of the exposition, and exploratory assistance with mathematical arguments and computations. All AI-assisted content was critically reviewed, revised, and independently verified by the authors. The authors take full responsibility for the content of the article.

\end{document}